\documentclass[final,leqno]{siamltex}
\usepackage{chngcntr}

\usepackage[arrow, matrix, curve]{xy}
\usepackage[latin1]{inputenc} 	
\usepackage[T1]{fontenc}
\usepackage[english]{babel}
\selectlanguage{english}
\usepackage{lmodern}
\usepackage{amsmath}
\usepackage{amssymb}  
\usepackage{mathtools}
\usepackage{mathrsfs}
\usepackage{euscript}
\usepackage{enumerate}
\usepackage{xspace}
\usepackage{hyperref}  
\usepackage{framed}
\usepackage{cite}
\usepackage{fancyhdr}
\usepackage{bbm}
\usepackage{float}
\usepackage{comment}
\usepackage{graphicx}
\usepackage{textcomp}
\usepackage{url}
\usepackage{srcltx} 
\usepackage{hyperref} 
\usepackage{microtype}
\usepackage{listings}

\DeclareFontShape{OMX}{cmex}{m}{n}{
  <-7.5> cmex7
  <7.5-8.5> cmex8
  <8.5-9.5> cmex9
  <9.5-> cmex10
}{}
\SetSymbolFont{largesymbols}{normal}{OMX}{cmex}{m}{n}
\SetSymbolFont{largesymbols}{bold}  {OMX}{cmex}{m}{n}

\hypersetup{draft=false, colorlinks=true, linkcolor=black, urlcolor=blue, citecolor=black}
\newtheorem{remark}[theorem]{Remark}


\newcommand{\R}{\mathbb{R}} 
\newcommand{\N}{\mathbb{N}} 

\newcommand{\K}{\mathcal{K}}
\renewcommand{\L}{\mathcal{L}}
\renewcommand{\K}{\mathcal{K}}
\newcommand{\M}{\mathcal{M}}
\newcommand{\A}{\mathcal{A}}
\newcommand{\J}{\mathcal{J}}

\newtheorem{defi}[theorem]{Definition}
\newtheorem{algo}[theorem]{Algorithm}
\newtheorem{ex}[theorem]{Example}

\author{Gesa Sarnighausen\thanks{Institute for Numerical and Applied Mathematics, University of G\"ottingen, Germany ({\tt g.sarnighausen@math.uni-goettingen.de}).} \and Anne Wald\thanks{Institute for Numerical and Applied Mathematics, University of G\"ottingen, Germany ({\tt a.wald@math.uni-goettingen.de})} \and Alexander Meaney \thanks{Department of Mathematics and Statistics, University of Helsinki, Finland ({\tt alexander.meaney@helsinki.fi}).} }

\title{Dynamic Computerized Tomography using Inexact Models and Motion Estimation}
        
\begin{document}

\maketitle

\textbf{Abstract.}
Reconstructing a dynamic object with affine motion in computerized tomography (CT) leads to motion artifacts if the motion is not taken into account. In most cases, the actual motion is neither known nor can be determined easily. As a consequence, the respective model that describes CT is incomplete. The iterative RESESOP-Kaczmarz method can - under certain conditions and by exploiting the modeling error - reconstruct dynamic objects at different time points even if the exact motion is unknown. However, the method is very time-consuming. To speed the reconstruction process up and obtain better results, we combine the following three steps:
\begin{enumerate}
\item RESESOP-Kacmarz with only a few iterations is implemented to reconstruct the object at different time points.
\item The motion is estimated via landmark detection, e.g. using deep learning.
\item The estimated motion is integrated into the reconstruction process, allowing the use of dynamic filtered backprojection. 
\end{enumerate}
We give a short review of all methods involved and present numerical results as a proof of principle.

\vspace*{2ex}

\textbf{Keywords:} dynamic computerized tomography, affine motion, regularization, sequential subspace optimization, landmark detection, dynamic filtered backprojection, hybrid reconstruction

\vspace*{2ex}


\section{Introduction} 

Computerized tomography (CT) is an imaging technique that exploits the characteristic attenuation of X-radiation by different materials: By reconstructing the density inside an unknown object from measurements of the intensity loss, it is possible to visualize the space-dependent density of the object. CT thus requires the stable solution of an ill-posed inverse problem, see \cite{natterer}. 

The Radon operator $R$ - the forward operator of computerized tomography - explains the relation between the scanned object's density $f$ and the measured data $g$. This is usually formulated as an operator equation 
\begin{align*}
R f = g,
\end{align*}

\noindent
where the impact $g$ is called a sinogram, which is measured by the CT scanner. The inverse problem thus consists in computing the cause $f$ 
from the knowledge of $R$ and $g$.
The most popular method to solve this problem is filtered backprojection (FBP), which works well for static objects, see 
\cite{natterer, natterer2, PrinciplesCTI}. 

However, the scanned object may move or deform during the scanning process since it takes time to scan the object from all necessary angles. An overview about dynamic reconstruction methods can be found in \cite{bonnet03}.

Let the map $\Gamma: \Omega \to \Omega$ on the domain $\Omega$ of $f$ describe the motion. Then, instead of $R$, we consider the dynamic Radon operator $R^\Gamma := R \circ \Gamma$. If the motion is not taken into account, e.g.~in FBP, this typically leads to severe motion artifacts in the reconstruction. In most applications, the motion is not a quantity of interest, such that one usually aims at reconstructing a specific state of the object.

In practice, compensating for the motion can be important in medical imaging, e.g.~since the patient is breathing and the heart is beating. On the nano scale (nano-CT), manufacturing tolerances, small vibrations and object drift lead to a relative motion between the object and the scanner \cite{nanoCTLegierung}. 
If the motion is known, it is possible to incorporate it into the reconstruction, see, e.g., \cite{HahnDynamicFBP, Hahn2021, katsevich10} and the references therein. Otherwise, one either has to estimate the motion first, for example by tracking markers, and use the dynamic model for the inversion as in \cite{hahnMotionEstimation}, or by correcting the motion in the measured sinogram and use the standard model as in \cite{Lu2002}, or both the motion and the object have to be reconstructed jointly \cite{arridge_fernsel_hauptmann22, Chen19,Burger_2017}, which is particularly demanding if there is only a single projection per time step available. The problem gets even more involved if only sparse data is available \cite{Bubba17}.


In this article, we consider the standard CT setup in two dimensions with a parallel beam geometry. The angle of the tomograph and, by consequence, the position of the X-rays source, is given by $\varphi \in [0,2\pi]$ and the offset of an X-ray beam by $s \in [-1,1]$. The goal is to reconstruct a fixed state of the object from measurements affected by a rigid body motion. We combine two ways of including the motion in the model:
\begin{enumerate}
    \item Directly, by using the dynamic model
    \begin{displaymath}
        g^{\Gamma}(\varphi,s) = R^{\Gamma} f(\varphi,s) \coloneqq \int_{\R^2} f(\Gamma_{\varphi} x)\delta(s - x^T \theta(\varphi)) \,\mathrm{d}x,
    \end{displaymath}
    \noindent
    for $s \in \R, \varphi \in [0,2\pi]$, $\theta(\varphi) = ( \cos (\varphi),  \sin (\varphi))^\top$ and the Dirac's delta distribution $\delta$.
    \item Indirectly, by interpreting the static operator as an inexact version of the dynamic operator and adding the inexactness $\eta$ to the static model:
    \begin{displaymath}
          g^{\Gamma}(\varphi,s) = R f(\varphi,s) + \eta(\varphi,s) \coloneqq \int_{\R^2} f(x) \delta(s - x^T \theta(\varphi)) \,\mathrm{d}x + \eta(\varphi,s).
    \end{displaymath}
\end{enumerate}
\begin{figure}[ht]
\centering
\caption{Combined Method}
\includegraphics[width=0.7\textwidth]{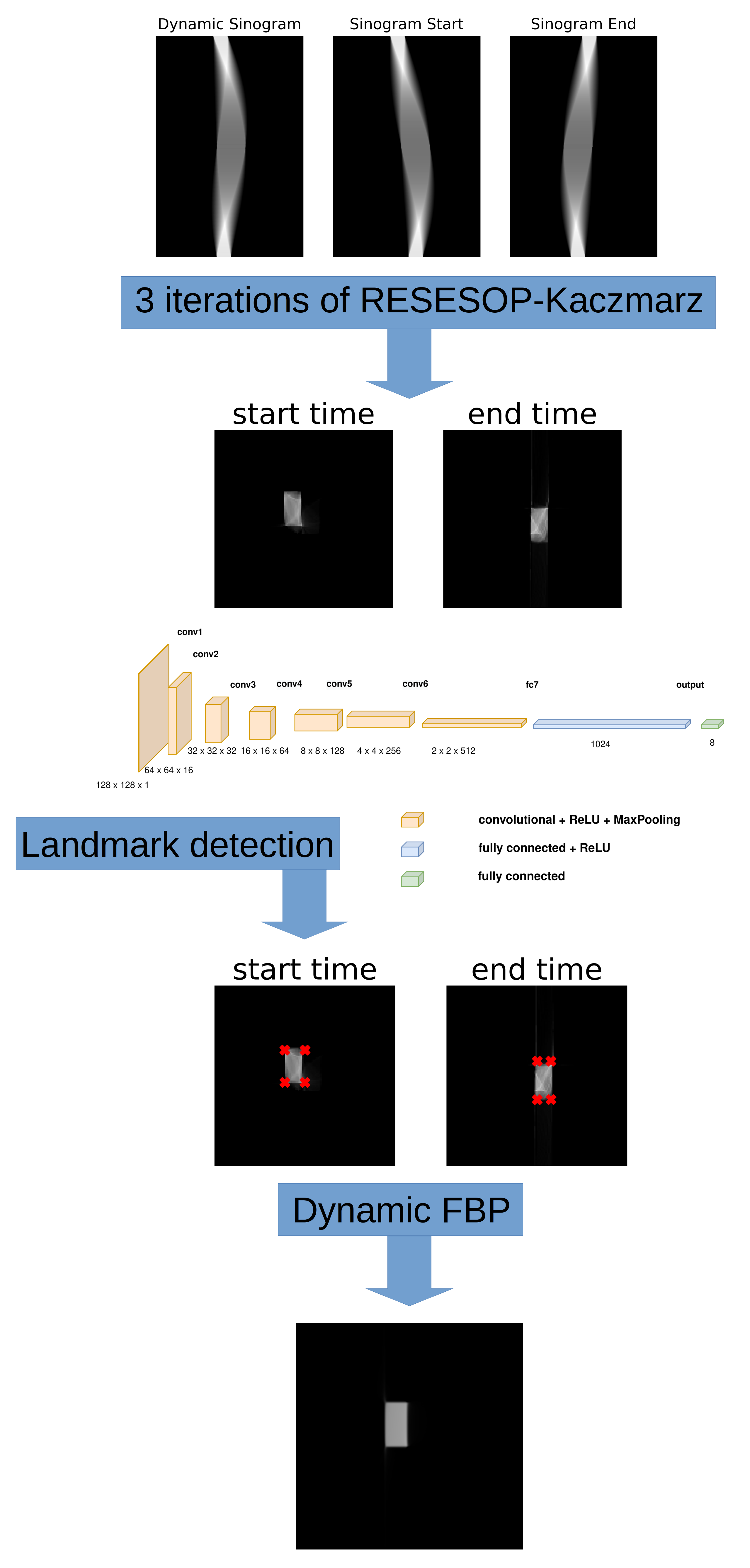}
\label{fig:pipeline}
\end{figure}

A class of adapted RESESOP-Kaczmarz algorithms \cite{RESESOP}, RESESOP being short for REgularizing SEquential Subspace OPtimization, can be used for the second approach. First, our dynamic CT problem is considered a semi-discrete inverse problem $R^{\Gamma}_{(\varphi,s)} f = g(\varphi,s)$ for all discrete tuples $(\varphi,s)$. The motion is interpreted as an inexactness in the forward model with respect to the static model, which needs to be estimated and is then compensated when computing the reconstruction. Hence, the subproblems are combined similarly as in Kaczmarz' iteration, but with an adapted RESESOP projection step instead of the usual projection onto the solution set of the respective subproblem. \\ 
Instead of knowing the motion, it is thus sufficient to estimate the inexactness of the model. The estimation of the inexactness is still an open task, but has certain benefits: the estimate needs not to be exact, but a small overestimation often leads to reasonable results, it can incorporate temporal and local information \cite{RESESOP}, and the respective reconstructions seem to yield a good basis for data-driven image processing \cite{luetjen23arxiv}. However, RESESOP-Kaczmarz is relatively slow due to the Kaczmarz loops which can not be parallelized here. 

To obtain a more efficient reconstruction technique under the assumption that an estimate of the inexactness is available, we propose a hybrid approach consisting of three steps for dynamic CT as illustrated in Figure \ref{fig:pipeline}:



\begin{enumerate}
\item \textbf{RESESOP-Kaczmarz}

First, the RESESOP-Kaczmarz algorithm is used with a very small amount of iterations and in a small resolution to calculate rather rough reconstructions of certain states of the object. 

\item \textbf{Landmark detection}

In each reconstruction, we identify landmarks of the object and track them to get an estimate of the actual motion. This task can be done by applying a trained deep neural network, but also manually.

\item \textbf{Dynamic Filtered Backprojection}

As the last step, the dynamic filtered backprojection algorithm is used with the estimated motion to get a better reconstruction of the object in high resolution.

\end{enumerate}

For a numerical proof of concept, we consider a simple directed rigid body motion of the motion, i.e., the object shifts in a fixed direction with a constant speed during data acquisition. 

The article is structured as follows. Section \ref{s:methods} explains the three methods we used. We start by presenting the RESESOP-Kaczmarz method in Section \ref{ss:Res-Kac}.
Section~\ref{ss:landmark} addresses the derivation of the underlying motion from landmark detection via deep learning or by hand.
and in Section \ref{ss:dynFBP}, the dynamic filtered backprojection algorithm for affine deformations is reviewed.
Section~\ref{s:results} contains the numerical results of the combined method introduced in this article for generated and real measured data.

\vspace*{2ex}

\section{Methods}
\label{s:methods}

\subsection{RESESOP-Kaczmarz}
\label{ss:Res-Kac}

The regularizing sequential subspace optimization (RESESOP) method is a regularizing iterative method to solve linear or nonlinear inverse problems in Hilbert and Banach spaces \cite{schoepferProjections, schoepferFast, aw18} and has been extended to problems with an inexact forward operator \cite{RESESOP}. It can especially be used for dynamic inverse problems if the static operator is considered the inexact version of the dynamic forward operator. The method combines the projection step from RESESOP with the semi-discrete setting from the Kaczmarz method and can be considered a kind of regularized gradient method with several search directions and a step size regulation by projecting onto suitable subsets of the source space.

Let ($X, \lVert \cdot \rVert_X$) 
and ($Y, \lVert \cdot \rVert_Y$) 
be real Hilbert spaces with their corresponding norms and inner products and 
$\mathcal{L}(X,Y)$ be the space of bounded linear operators from $X$ to $Y$. The inverse problem we want to solve is then 
\begin{align}
\label{eq:invProb}
    \A f = g, \quad \A \in \mathcal{L}( X, Y)
\end{align}
for measured data $g \in Y$ and the solution set $\mathcal{M}_{\A,g} \coloneqq \{f \in X : \A f = g\}$.

\vspace*{2ex}

\paragraph*{Sequential subspace optimization (SESOP)}
\label{s:sesop}
The SESOP method iteratively solves inverse problems by projecting onto suitable convex affine subspaces in a continuous setting with noise-free data and an exact forward operator.
To construct such subspaces, it is first necessary to define hyperplanes, half-spaces and stripes.

\vspace*{2ex}

\begin{defi}(Hyperplane, half-space, stripe)

\noindent
For $u \in X \setminus \{0\}$ and $ \alpha \in \R$ the set 
\begin{align*}
H(u,\alpha) \coloneqq \{f \in X :  \langle u , f \rangle_X = \alpha \}
\end{align*}
is called a \textit{hyperplane} in $X$ and the set
\begin{align*}
H_{\leq}(u,\alpha) \coloneqq \{f \in X :  \langle u , f \rangle_X \leq \alpha \}
\end{align*}
is called a \textit{half-space}. Analogously the half-spaces $H_{\geq}(u,\alpha)$, $H_{<}(u,\alpha)$ and $H_{>}(u,\alpha)$ are defined.

Now let additionally be $\xi \geq 0$. Then a \textit{stripe} can be defined as the set
\begin{align*}
H(u,\alpha, \xi) \coloneqq \{f \in X : \lvert \langle u , f \rangle_X - \alpha \rvert \leq \xi \}.
\end{align*}
Each stripe is bounded by the hyperplanes $H(u,\alpha + \xi)$ and $H(u,\alpha - \xi)$.
\end{defi}

\vspace{2ex}

With this notation, we are able to formulate 




\label{ss:sesop}

\begin{algo}{(SESOP)}

For $i \in I_n \subset \N_0$ a finite index set, the adjoint operator $\A^*$ of $\A$ and search directions $\A^*w_{n,i}$ for $w_{n,i} \in Y$, the $(n+1)$-th iterate $f_{n+1}$ can be calculated by 
\begin{align}
\label{SESOP}
f_{n+1} = P_{H_{n}}(f_{n})
\end{align}
as the metric projection $P_{H_{n}}$ of the $n$-th iterate $f_{n}$
onto the intersection of hyperplanes
\begin{align*}
H_{n} = \bigcap_{i \in I_n} H(\A^* w_{n,i}, \langle w_{n,i}, g \rangle_Y).
\end{align*}

\end{algo}

\vspace*{2ex}

\begin{remark}

For arbitrary elements $w \in Y$ the hyperplane 
$H(\A^*w, \langle w, g \rangle_Y) = \{ f \in X : \langle \A^*w, f \rangle_X = \langle w , g\rangle_Y \}$
includes $\mathcal{M}_{\A,g}$, since for every $z \in \mathcal{M}_{\A,g}$ holds
\begin{align}
\label{eq:sesopOnHyperplane}
\langle \A^*w, z \rangle_X = \langle w, \A z \rangle_Y = \langle w , g \rangle_Y.
\end{align}
This gives an intuition as to why the hyperplanes are chosen as above.
\end{remark}

\vspace*{2ex}

\noindent
The calculation of the next iterate $f_{n+1}$ can also be considered an update to the current iterate $f_n$ with multiple search directions per iteration and regulation of the step size:

\begin{lemma}{(Calculation of projection onto intersection of hyperplanes)}
\label{lem:projectionIntersection}

\noindent
Let $H = \bigcap_{i = 1}^{n} H(u_i,\alpha_i)$ be the intersection of hyperplanes for $u_i \in X$ and $\alpha_i \in \R$ for $i = \{1,\dots,n\}, n\in \N$. Then the projection of $f \in X$ onto $H$ can be determined as

\begin{align}
\label{eq:projection}
P_H(f) = f - \sum_{i = 1}^n \tilde{t_i} u_i,
\end{align}

where $\tilde{t} = (\tilde{t_1},\dots,\tilde{t_n})$ minimizes the function

\begin{align*}
h(t) = \frac{1}{2} \big\lVert f - \sum_{i = 1}^n t_i u_i \big\rVert_X^2 + \sum_{i = 1}^n t_i \alpha_i.
\end{align*}
If the elements $u_1,\dots ,u_n$ are linearly independent, $h$ has a unique minimal solution $\tilde{t}$.

\end{lemma}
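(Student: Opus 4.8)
The plan is to characterize the metric projection onto the affine subspace $H=\bigcap_{i=1}^n H(u_i,\alpha_i)$ by the standard variational argument and then unwind the resulting linear system into the stated form. First I would note that $H$ is a closed convex (in fact affine) subset of the Hilbert space $X$, nonempty whenever it appears in the algorithm, so the metric projection $P_H(f)$ exists and is unique; its defining property is that $P_H(f)\in H$ and $\langle f-P_H(f),\,v-P_H(f)\rangle_X\le 0$ for all $v\in H$, which for an affine set sharpens to the orthogonality condition $f-P_H(f)\perp (H-H)$, i.e. $f-P_H(f)$ is orthogonal to the linear subspace $\mathrm{span}\{u_i\}^{\perp\perp}$... more precisely $\langle f - P_H(f), w\rangle_X = 0$ for every $w$ with $\langle u_i,w\rangle_X=0$ for all $i$. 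This forces $f-P_H(f)\in\overline{\mathrm{span}}\{u_1,\dots,u_n\}$, hence $f-P_H(f)=\sum_{i=1}^n t_i u_i$ for some coefficients $t_i\in\R$, which already gives the shape of \eqref{eq:projection}.

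Next I would pin down the coefficients. Writing $P_H(f)=f-\sum_i t_i u_i$, the constraint $P_H(f)\in H$ reads $\langle u_j, f-\sum_i t_i u_i\rangle_X=\alpha_j$ for $j=1,\dots,n$, i.e. the linear system $\sum_i \langle u_j,u_i\rangle_X\, t_i = \langle u_j,f\rangle_X-\alpha_j$. The second, cleaner route — and the one I would actually present, since it matches the lemma's phrasing — is to observe that the function
\begin{align*}
h(t) = \tfrac12\Big\lVert f-\sum_{i=1}^n t_i u_i\Big\rVert_X^2 + \sum_{i=1}^n t_i\alpha_i
\end{align*}
is convex and differentiable in $t\in\R^n$, with gradient $\partial_j h(t) = -\langle u_j,\,f-\sum_i t_i u_i\rangle_X + \alpha_j$. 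Setting $\nabla h(\tilde t)=0$ is exactly the system above, i.e. exactly the condition that $f-\sum_i\tilde t_i u_i\in H$; combined with the fact that $f-\sum_i\tilde t_i u_i$ automatically lies in $\mathrm{span}\{u_i\}$ and is therefore the orthogonal projection, we conclude $P_H(f)=f-\sum_i\tilde t_i u_i$. Thus any minimizer $\tilde t$ of $h$ yields the projection via \eqref{eq:projection}.

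For the uniqueness claim, I would compute the Hessian of $h$, which is the Gram matrix $G=(\langle u_i,u_j\rangle_X)_{i,j}$; this is symmetric positive semidefinite in general, and positive definite precisely when $u_1,\dots,u_n$ are linearly independent (a standard fact: $t^\top G t = \lVert\sum_i t_i u_i\rVert_X^2 = 0$ iff $\sum_i t_i u_i = 0$ iff $t=0$ under independence). Hence under linear independence $h$ is strictly convex and coercive, so it has a unique minimizer $\tilde t$, and the proof is complete. The only point requiring a little care — the ``main obstacle,'' such as it is — is the justification that the stationarity equations for $h$ are equivalent to membership $P_H(f)\in H$ and that the resulting point is genuinely the metric projection rather than merely a point of $H$; this is handled by the orthogonality characterization of projections onto affine subspaces in Hilbert space, so no completeness or closedness subtlety beyond the standard projection theorem is needed. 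Everything else is routine linear algebra and differentiation.
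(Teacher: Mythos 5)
Your proof is correct. It does, however, package the argument differently from the paper's (which follows Proposition~3.12 of Sch\"opfer et al.): there, one first identifies $H$ with $z+(\operatorname{span}\{u_1,\dots,u_n\})^{\perp}$ for an arbitrary $z\in H$, uses projection identities for shifted subspaces to convert $P_H(f)$ into the projection $P_{f+\operatorname{span}\{u_1,\dots,u_n\}}(z)$, and then obtains $h$ by expanding the squared distance $\tfrac12\lVert z-(f-\sum_i t_iu_i)\rVert_X^2$ and dropping terms constant in $t$, using $\langle z,u_i\rangle_X=\alpha_i$. You instead argue directly from the variational characterization of the metric projection onto the affine set $H$: orthogonality of $f-P_H(f)$ to $H-H=(\operatorname{span}\{u_i\})^{\perp}$ forces $f-P_H(f)\in\operatorname{span}\{u_1,\dots,u_n\}$, and you then match the constraint $P_H(f)\in H$ with the stationarity system $\nabla h(\tilde t)=0$, using convexity of $h$ to equate stationary points and minimizers, and the Gram matrix to get strict convexity (hence uniqueness) under linear independence. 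The payoff of your route is that it avoids the auxiliary projection identities and the ``swap the roles of $f$ and $z$'' step, giving a self-contained first-order-condition derivation; the paper's route avoids differentiating $h$ altogether and exhibits $h$ as a squared distance, which makes the geometric meaning of the minimizer immediate. Both arguments implicitly require $H\neq\emptyset$ (so that $P_H(f)$, equivalently a minimizer of $h$, exists), which you correctly flag; beyond that there is no gap.
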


\vspace*{2ex}

\noindent
A proof of Lemma \ref{lem:projectionIntersection} can be found in Proposition 3.12 in \cite{schoepferProjections}.
If the search directions $\A^*w_{n,i}$ for $i \in I_n$ and the finite index set $I_n$ are chosen correctly, the iterates $f_n$ converge to a solution $f$ with $\A f = g$, see e.g. Section 6.2.2 of \cite{RegMethodsBanachSpace}.

\vspace*{2ex}

\paragraph*{RESESOP}
\label{ss:resesop}
RESESOP is the regularizing version of SESOP and can take into account noisy data and an inexact forward operator. 

Let the global modelling error $\eta > 0$ of the inexact forward operator $\A^\eta$ and the global noise level $\delta$ of the noisy data $g^\delta$ be given by
\begin{align*}
    \lVert \A^\eta f - \A f \rVert \leq \eta \rho \text{ for all } f \in B_\rho (0) \subset X, \quad \lVert g - g^\delta \rVert_Y \leq \delta
\end{align*}
\noindent
for $\rho > 0$. 

To regularize the SESOP method, the projection onto hyperplanes is then changed to a projection onto stripes
\begin{align*}
       H_{n,i}^{\eta,\delta} := \big\lbrace f \in X \ : \ \left\lvert\left\langle (\A^{\eta})^* w_{n,i}^{\eta,\delta}, f \right\rangle - \left\langle w_{n,i}^{\eta,\delta},g^{\delta} \right\rangle\right\rvert \leq (\delta+ \eta\rho) \lVert w_{n,i}^{\eta,\delta} \rVert  \big\rbrace
\end{align*}

\noindent
As in the case of exact data, the stripes contain the solution set $\M_{\A f=g}$.

The discrepancy principle \cite{inverseCrime} assumes that in the inexact case the solution cannot get better than the noise level and modeling error and yields a finite stopping index $n^*$ as soon as 
\begin{align*}
\lVert \A^{\eta} f_{n^*} - g^{\delta} \rVert_Y \leq \tau (\eta \rho + \delta)
\end{align*}
for a fixed $\tau > 1$.
A proof of the convergence of this version of RESESOP can be found in \cite{RESESOP}. For the case of error-free forward operators, see, e.g., \cite{RegMethodsBanachSpace}.

\vspace*{2ex}

\paragraph*{RESESOP-Kaczmarz}
\label{s:resesop-kacz}
We summarize some elementary results from \cite{RESESOP}. 
Let $\A_{j,m} \in \mathcal{L}(X,Y_{j,m})$ and $\A_{j,m}^\eta \in \mathcal{L}(X,Y_{j,m})$ be the inexact version of $\A_{j,m}$.

In a semi-discrete setting, we can use the RESESOP-Kaczmarz method to solve a system of equations
\begin{align*}
    \A_{j,m} f &= g_{j,m}, \\ 
    \lVert \A_{j,m}^\eta - \A_{j,m} \rVert_{\mathcal{L}(X,Y_{j,m})} &\leq \eta_{j,m}, \\ 
    \lVert g^\delta_{j,m} - g_{j,m}, \rVert_{Y_{j,m}} &\leq \delta_{j,m}
\end{align*}

\noindent
for $j \in \J$, $m \in \M$ and finite index sets $\J$ and $\M$.

Combining RESESOP with Kaczmarz's method allows the inclusion of local modeling errors and local noise in the reconstruction process.
This is especially an advantage for dynamic computer tomography, since the model error of the dynamic forward operator changes with time.

\vspace*{2ex}

\begin{remark}
\label{ex:diffVersion}
In case of computerized tomography with $K \in \mathbb{N}$ source positions $\varphi_k \in [0,2\pi]$, $k \in \K \coloneqq \{0,\dots K-1\}$, and $L \in \mathbb{N}$ detector points $s_l \in [-1,1]$, $l \in \mathcal{L} \coloneqq \{0,\dots L-1\}$, the algorithm allows four different modalities:
\begin{enumerate}[V1:]
\item $\J = \K \times \mathcal{L}$ and $\M = \{0\}$ results in Kaczmarz projections with respect to both source positions and detector points. 
\item $\J = \K $ and $\M = \mathcal{L}$ results in Kaczmarz projections with respect to the source positions and averaging with respect to the detector points.
\item $\J = \mathcal{L}$ and $\M = \K$ results in Kaczmarz projections with respect to the detector points and averaging with respect to the source directions.
\item $\J = \{0\}$ and $\M =  \K \times \mathcal{L}$ results in averaging with respect to both detector points and search directions.
\end{enumerate}
In this contribution, we use the version V1, which is the most precise algorothm in the sense that each subproblem is considered individually. We also emphasize that the modalities V2, V3 and V4 allow for (partial) parallelization of the code in contrast to V1.
\end{remark}

\vspace*{2ex}

\begin{algo}{(RESESOP-Kaczmarz)}
\label{algo:RESESOP_Kaczmarz}

\noindent
Let $[n] = n \text{ mod } J$. Fix an initial value $f_0^{\eta,\delta} \in \mathcal{B}_\rho(0)$ and precision constants $\tau_{j,m} > 1$ for $j \in \J$ and $m \in \M$.

While the discrepancy principle is not yet fulfilled for all $m \in \M$ and the current iterate is not close enough to the solution of the current subproblem, i.e., $D_{n}^{\eta,\delta} \neq \emptyset$ with
\begin{align*}
D_{n}^{\eta,\delta} \coloneqq \{ m \in \M : \lVert \A^\eta_{[n],m} f_n^{\eta,\delta} - g^\delta_{[n],m} \rVert_{Y_{[n],m}} > \tau_{[n],m} (\eta_{[n],m} \rho + \delta_{[n],m}) \}
\end{align*}
for each $n \in \N_0$,
perform the next iteration step

\begin{align*}
f_{n+1}^{\eta,\delta} \coloneqq P_{H_{n}^{\eta,\delta,\rho}}\left(f_n^{\eta,\delta}\right), \quad H_{n}^{\eta,\delta,\rho} \coloneqq \bigcap_{i \in I_n} H\left( u_{n,i}^{\eta,\delta}, \alpha_{n,i}^{\eta,\delta}, \xi_{n,i}^{\eta,\delta} \right).
\end{align*}

\noindent
We choose for $n \in \N$ and $i \in I_n^{\eta,\delta} \subset \{n-N+1,\dots,n\} \cap \N_0$, $N \in \N \setminus \{ 0 \}$
\begin{align*}
u_{n,i}^{\eta,\delta} &\coloneqq \sum_{m \in \M} \left(A_{[i],m}^\eta\right)^* w_{n,i}^{\eta,\delta,m} \\
 \alpha_{n,i}^{\eta,\delta} &\coloneqq  \sum_{m \in \M} \langle w_{n,i}^{\eta,\delta,m} , g^\delta_{[i],m} \rangle_Y \\
  \xi_{n,i}^{\eta,\delta} &\coloneqq \sum_{m \in \M} (\delta + \eta \rho) \lVert w_{n,i}^{\eta,\delta,m} \rVert_{Y_{[i],m}}\\
 w_{n,i}^{\eta,\delta,m} &\coloneqq 
\begin{cases}
\A^\eta_{[i],m} f_n^{\eta,\delta} - g^\delta_{[i],m} & \text{if }  m \in D_n^{\eta,\delta}\\
0 & \text{if } m \notin D_n^{\eta,\delta}
\end{cases}.
\end{align*}
\noindent
One full iteration over all $J$ subproblems is completed if $[n] = 0$.
Stop the algorithm after a full iteration at the index $[n^*] = 0$ when $f_{n^*}^{\eta,\delta} = f_{n^*-J}^{\eta,\delta}$. 
This means that the set $D^{\eta,\delta}_n$ has been empty at least for $n \in \{n^*,\dots,n^*-J\}$ and therefore the reconstruction cannot get better. 
\end{algo}

\vspace*{2ex}

\begin{theorem}{(Convergence of SESOP-Kaczmarz)}

\noindent
Let $\{f_n\}_{n\in\N}$ be the sequence of iterates generated by the RESESOP-Kaczmarz algorithm with an initial value $f_0 \in X$ and exact data, i.e., $\eta = \delta = 0$. 

If the optimization parameters $\lvert t_{n,i} \rvert \leq t$ for $i \in I_n$ are bounded by a constant $t > 0$, then $\{f_n\}_{n\in\N}$ converges strongly to a solution $f$ of the semi-discretized problem $\A_{j,m} f = g_{j,m}$ for $j \in \J$, $m \in \M$.

\end{theorem}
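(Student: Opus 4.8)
The plan is to adapt the classical convergence argument for Kaczmarz-type subspace methods, in the spirit of Section 6.2.2 of \cite{RegMethodsBanachSpace} and the results of \cite{RESESOP}. The backbone is a monotonicity/descent estimate followed by a weak-convergence argument, which is then upgraded to strong convergence using the boundedness of the optimization parameters $t_{n,i}$. First I would establish the \emph{descent property}: for any $z\in\M^{sd}_{\A,g}$ (the solution set of the semi-discrete system), the iterates satisfy $\lVert z-f_{n+1}\rVert_X^2 \le \lVert z-f_n\rVert_X^2 - \lVert f_{n+1}-f_n\rVert_X^2$. This is immediate from the variational characterization of the metric projection onto $H_n$ — namely $\langle f_{n+1}-z, f_{n+1}-f_n\rangle_X \le 0$ for all $z\in H_n$ — together with the fact that $H_n$ contains $\M^{sd}_{\A,g}$ (since each hyperplane $H(u_{n,i},\alpha_{n,i})$ does, by the computation analogous to \eqref{eq:sesopOnHyperplane}, using that $n\in I_n$ forces the current residual direction into the search directions). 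Summing the descent inequality over $n$ shows that $\{\lVert z-f_n\rVert_X\}$ is monotonically decreasing, hence the iterates are bounded, and that $\sum_n \lVert f_{n+1}-f_n\rVert_X^2 < \infty$, so in particular $\lVert f_{n+1}-f_n\rVert_X \to 0$.

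Next I would extract the residual decay. Because $n\in I_n$ and the residual-based search direction $u_{n,n} = \sum_{m\in\M}\A_{[n],m}^* w_{n,n}^m$ with $w_{n,n}^m = \A_{[n],m}f_n - g_{[n],m}$ enters $H_n$, the single-hyperplane projection $\tilde f_{n+1}$ onto $H(u_{n,n},\alpha_{n,n})$ moves $f_n$ by a definite amount proportional to $\max_m\lVert \A_{[n],m}f_n - g_{[n],m}\rVert^2 / \lVert u_{n,n}\rVert_X$, and $\lVert f_{n+1}-f_n\rVert_X \ge \lVert \tilde f_{n+1}-f_n\rVert_X$ since $H_n \subset H(u_{n,n},\alpha_{n,n})$. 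Combined with $\lVert f_{n+1}-f_n\rVert_X\to 0$ and the boundedness of $\lVert u_{n,n}\rVert_X$ (which follows from boundedness of the iterates and of the operators $\A_{j,m}$), this yields $\lVert \A_{[n],m}f_n - g_{[n],m}\rVert_{Y_{[n],m}}\to 0$ for every $m\in\M$; sweeping through the cyclic control $[n]$ over all $j\in\J$ shows the full residual $\lVert\A_{j,m}f_n - g_{j,m}\rVert\to 0$ along the sequence.

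I would then pass to weak limits. Boundedness gives a weakly convergent subsequence $f_{n_k}\rightharpoonup \bar f$; by weak lower semicontinuity of the norm and continuity of $\A_{j,m}$, the residual decay forces $\A_{j,m}\bar f = g_{j,m}$ for all $j,m$, i.e.\ $\bar f\in\M^{sd}_{\A,g}$. Since the descent inequality holds with $z=\bar f$, the sequence $\lVert \bar f - f_n\rVert_X$ is monotonically decreasing and has a subsequence tending to $0$, hence $f_n\to\bar f$ strongly. Finally, to identify the limit as claimed: one shows $f_{n+1}-f_n \in \mathrm{span}\{u_{n,i}:i\in I_n\}$ lies in a fixed finite-dimensional-per-step structure, and — using $\lvert t_{n,i}\rvert\le t$ — that $f_n - f_0 \in \overline{\mathrm{span}}\{\A_{j,m}^* w : j\in\J, m\in\M, w\in Y_{j,m}\}$; combined with $f_n\to f$, this locates $f$ in $f_0 + \overline{\mathcal{R}(\A^*)}$, which together with $f\in\M^{sd}_{\A,g}$ characterizes it as the metric projection $P_{\M^{sd}_{\A,g}}(f_0)$.

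The main obstacle is the step from weak to strong convergence together with the correct identification of the limit. The descent estimate only controls $\lVert z-f_n\rVert_X$ for a \emph{fixed} solution $z$, so to promote a weakly convergent subsequence to a strongly convergent full sequence one must show the weak limit is itself a solution and then invoke monotonicity with that particular $z$; subtlety arises because different subsequences could a priori have different weak limits. This is precisely where the boundedness hypothesis $\lvert t_{n,i}\rvert\le t$ is essential: it guarantees the increments stay controlled and that the orthogonality relation pinning $f_n$ into $f_0 + \overline{\mathcal{R}(\A^*)}$ survives in the limit, ensuring the weak limit is unique and equals $P_{\M^{sd}_{\A,g}}(f_0)$. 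The cyclic (Kaczmarz) control adds bookkeeping — one must track residuals over a full sweep $[n]=0,\dots,J-1$ rather than a single index — but introduces no essential new difficulty.
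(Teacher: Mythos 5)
Your skeleton (descent property from the metric projection onto $H_n$, which contains the solution set; summability of $\lVert f_{n+1}-f_n\rVert_X^2$; decay of the residual of the active subproblem plus a bridging argument over one sweep; weak cluster points lie in $\M^{sd}_{\A,g}$; identification of the limit through $f_n-f_0\in\overline{\sum_{j,m}\mathcal{R}(\A_{j,m}^*)}$) is the same as in the proof the paper refers to in \cite{RESESOP}, whose ingredients (descent property, weak convergence of the iterates) also underlie the paper's presentation. Up to the weak-convergence stage your argument is essentially correct, and the final identification of the limit as $P_{\M^{sd}_{\A,g}}(f_0)$ is fine (and more than the statement asks for).

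The genuine gap is the weak-to-strong step. You claim that, with $z=\bar f$, the monotonically decreasing sequence $\lVert \bar f-f_n\rVert_X$ ``has a subsequence tending to $0$''; this does not follow from $f_{n_k}\rightharpoonup\bar f$. Weak convergence gives no control on $\lVert f_{n_k}-\bar f\rVert_X$ (an orthonormal sequence converges weakly to $0$ while staying at distance $1$), so your argument as written only yields weak convergence of the whole sequence (an Opial-type conclusion), not the strong convergence asserted in the theorem. This is precisely where the hypothesis $\lvert t_{n,i}\rvert\le t$ must do real work, and not in the way you invoke it: in the cited proof one shows $\{f_n\}$ is a \emph{Cauchy sequence}. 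For $m\le k\le n$, with $k$ chosen in the intervening sweeps so that the corresponding residuals are minimal, one expands $\lVert f_n-f_m\rVert_X^2$ around $f_k$ and estimates the cross terms $\langle f_n-f_k, f_k-z\rangle_X$ and $\langle f_m-f_k, f_k-z\rangle_X$ by writing $f_j-f_k$ as a finite linear combination $\sum t_{l,i}\,\A_{[i],m}^*w_{l,i}^m$ with $\lvert t_{l,i}\rvert\le t$; then $\langle \A_{[i],m}^*w_{l,i}^m, f_k-z\rangle_X=\langle w_{l,i}^m, \A_{[i],m}f_k-g_{[i],m}\rangle_{Y_{[i],m}}$ is bounded by products of residuals, which vanish. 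Together with the convergence of $\lVert z-f_n\rVert_X$ this gives Cauchy-ness, hence a strong limit, which by the residual decay solves all subproblems. Without this (or an equivalent mechanism), the step from the weakly convergent subsequence to strong convergence of $\{f_n\}$ is unsupported.
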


\vspace*{2ex}

\begin{theorem}{(Convergence and Regularization of RESESOP-Kaczmarz)}

\noindent
Let all assumptions from the previous theorem be true and the set of search directions be linearly independent. Then
\begin{enumerate}
\item 
the RESESOP-Kaczmarz algorithm with the adapted discrepancy principle as a stopping criterion leads to a finite stopping index $n_*$,
\item
for each iteration $n \in \N$ holds $\lim_{(\eta,\delta) \to (0,0)} f_n^{\eta,\delta} = f_n$,
\item

$\lim_{\eta,\delta \to (0,0)} f^{\eta,\delta}_{r_*} =  P_{M_{\A_{j,m} f = g_{j,m}} \cap \mathcal{B}_{ \rho}(0)}(f_0)$.
\end{enumerate}

\end{theorem}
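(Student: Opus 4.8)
The plan is to establish each of the three claims by piecing together the (commented-out but referenced) intermediate results of the RESESOP-Kaczmarz machinery: the regularized descent property, the finite-stopping-index lemma, and the continuous dependence on $(\eta,\delta)$. The common thread is the descent inequality for the full iterates $\bar f_r^{\eta,\delta}$, which says that as long as $D_{rJ}^{\eta,\delta,\J}\neq\emptyset$ the squared distance to any $z\in\M_{\A,g}^{sd}\cap\mathcal B_\rho(0)$ strictly decreases by a quantity controlled below by $\sum_k(\|\bar w_k^{\eta,\delta}\|^2)$ (using $\|w_k^{\eta,\delta,m}\|>\tau_{[k],m}(\delta_{[k],m}+\eta_{[k],m}\rho)$ on the discrepancy set). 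I would first record this inequality, since all three parts flow from it.

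For part (1), I would argue by contradiction exactly as in the proof idea already sketched in the excerpt: if no finite $r_*$ existed, then $D_{rJ}^{\eta,\delta,\J}\neq\emptyset$ for every $r$, so the descent inequality telescopes to give $\sum_r\sum_{k\in D_{rJ}^{\eta,\delta,\J}}\|\bar w_k^{\eta,\delta}\|^2<\infty$, forcing $\|\bar w_k^{\eta,\delta}\|\to0$; but on each nonempty discrepancy set $\|w_k^{\eta,\delta,m}\|>\tau_{[k],m}(\delta_{[k],m}+\eta_{[k],m}\rho)>0$, a fixed positive lower bound, which is the desired contradiction. For part (2), I would use the continuity statement (Lemma on regularization for RESESOP-Kaczmarz): by induction on $n$, the quantities $u_{n,i}^{\eta,\delta},\alpha_{n,i}^{\eta,\delta},\xi_{n,i}^{\eta,\delta},w_{n,i}^{\eta,\delta,m}$ all converge to their $\eta=\delta=0$ counterparts as $(\eta,\delta)\to(0,0)$, using boundedness and continuity of the $\A_{j,m}$ and, crucially, the linear independence of the search directions so that the Gram matrix $U_n^{\eta,\delta}$ stays invertible for small $(\eta,\delta)$ and the coefficient vector $t_n^{\eta,\delta}=(U_n^{\eta,\delta})^{-1}b_n^{\eta,\delta}$ converges; one must also check the case distinction on whether $D_n^{\eta,\delta}$ is empty, which only flips a sign in the projection formula and does not affect the limit.

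Part (3) is the genuinely delicate one and I expect it to be the main obstacle. The issue is uniformity: one wants $f_{r_*(\eta,\delta)}^{\eta,\delta}\to P_{\M_{\A,g}^{sd}\cap\mathcal B_\rho(0)}(f_0)$, but the stopping index $r_*(\eta,\delta)$ itself depends on $(\eta,\delta)$ and may blow up as $(\eta,\delta)\to(0,0)$. The standard remedy (as in the general RESESOP regularization theory) is a two-case split. If $r_*(\eta,\delta)$ stays bounded along a sequence $(\eta_k,\delta_k)\to(0,0)$, pass to a subsequence with constant stopping index $r_*$ and invoke part (2) together with the fact that the noise-free iterate $f_{r_*}$ already lies in $\M_{\A,g}^{sd}$ (the noise-free algorithm stops exactly at the solution set) and equals $P_{\M_{\A,g}^{sd}\cap\mathcal B_\rho(0)}(f_0)$ by the SESOP-Kaczmarz convergence theorem. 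If instead $r_*(\eta,\delta)\to\infty$, one combines the descent inequality (which keeps $\{\bar f_r^{\eta,\delta}\}$ in a fixed ball around $z$, hence bounded, for $r<r_*$), weak compactness, weak lower semicontinuity of the norm applied to $\|\A_{j,m}\bar f_r^{\eta,\delta}-g_{j,m}\|\le$ (something $\to0$ because the residuals on the discrepancy set are squeezed between $\tau(\eta\rho+\delta)$ and a vanishing tail), and the characterization of the minimum-norm-type solution as the metric projection of $f_0$; a stability/uniqueness argument then upgrades weak to strong convergence and pins down the limit as $P_{\M_{\A,g}^{sd}\cap\mathcal B_\rho(0)}(f_0)$. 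I would cite the analogous argument in \cite{RESESOP} for the technical estimates and would be careful that the ball $\mathcal B_\rho(0)$ enters correctly, since the projection is onto the intersection with that ball and the iterates are assumed to start in it.
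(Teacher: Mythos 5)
Parts (1) and (2) of your plan coincide with the argument the paper relies on (the telescoped descent inequality versus the fixed positive lower bound $\tau_{[k],m}(\delta_{[k],m}+\eta_{[k],m}\rho)$ for part (1), and the induction with the Gram matrix $U_n^{\eta,\delta}$ staying invertible for part (2)), and these are fine. The gap is in part (3), in both branches of your case split. In the bounded-stopping-index case you justify the identification of the limit by asserting that ``the noise-free algorithm stops exactly at the solution set,'' so that $f_{\hat n}\in\mathcal{M}^{sd}_{\mathcal{A},g}$ and equals the projection ``by the SESOP-Kaczmarz convergence theorem.'' That is not true: the exact SESOP-Kaczmarz iteration only converges to a solution asymptotically, and at the fixed finite accumulation index $\hat n$ the exact iterate need not lie in the solution set, nor does the convergence theorem say anything about a fixed finite iterate. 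The actual argument passes the discrepancy inequality $\lVert \mathcal{A}^{\eta^p}_{j,m} f^{(\eta,\delta)^p}_{\hat n}-g^{\delta^p}_{j,m}\rVert \le \tau_{j,m}(\delta^p_{j,m}+\eta^p_{j,m}\rho)$ to the limit using part (2), which forces $f_{\hat n}-f^+$ into $\bigcap_{j,m}\mathcal{N}(\mathcal{A}_{j,m})$, and then uses the structural fact that $f_{\hat n}-f_0$ lies in the closure of $\sum_{j,m}\overline{\mathcal{R}(\mathcal{A}^*_{j,m})}$, hence is orthogonal to that intersection of null spaces; only this pair of facts identifies $f_{\hat n}$ as $P_{\mathcal{M}^{sd}_{\mathcal{A},g}}(f_0)$. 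Without the range/null-space orthogonality step your argument cannot single out which solution is obtained.

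In the case $r_*(\eta,\delta)\to\infty$ your sketch (weak compactness, weak lower semicontinuity, then ``a stability/uniqueness argument upgrades weak to strong convergence and pins down the limit'') is hand-waving precisely at the decisive point: weak subsequential limits lying in the solution set do not by themselves equal $P_{\mathcal{M}^{sd}_{\mathcal{A},g}\cap\mathcal{B}_\rho(0)}(f_0)$, and no mechanism for strong convergence is given. The proof in the reference is more direct and avoids weak compactness altogether: since the exact iterates converge strongly to $f^+=P_{\mathcal{M}^{sd}_{\mathcal{A},g}}(f_0)$, choose $N$ with $\lVert f_N-f^+\rVert<\varepsilon/2$; for small $(\eta,\delta)$ one has $n_*\ge N$, and because $f^+$ itself is an admissible $z$ in the descent inequality, the distance $\lVert f_n^{\eta,\delta}-f^+\rVert$ is non-increasing in $n$, so $\lVert f^{\eta,\delta}_{n_*}-f^+\rVert\le\lVert f^{\eta,\delta}_N-f^+\rVert$, and part (2) at the fixed index $N$ finishes the $\varepsilon$-argument. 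If you want to keep your weak-convergence route you would still have to supply the identification of the limit (again via the $f_0+\overline{\sum\overline{\mathcal{R}(\mathcal{A}^*_{j,m})}}$ property) and a genuine argument for norm convergence, so the monotonicity-based argument is both shorter and the one that actually closes the proof.
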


\vspace*{2ex}

Proofs for both theorems can be found in \cite{RESESOP}.

\vspace*{2ex}

We give a pseudo-code for implementing RESESOP-Kaczmarz, discretized with respect to both source positions and detector points:

\vspace*{2ex}

\begin{lstlisting}[numbers=left, caption={Pseudo code for V1 of RESESOP-Kaczmarz with two search directions }, basicstyle = \footnotesize, label=alg1:resesop, mathescape=true, language=Pascal,lineskip=2pt]
$\mathbf{Input}$: $\K \text{ set of source positions, } \mathcal{L} \text{ set of detector points, }\tau_{k,l} > 1 \text{ for } k \in K, l \in L,$
$ g^{\delta} \text{ perturbed sinogram, } \eta,\delta \text{ model/data error for each } k \in \K, l \in \mathcal{L}, \text{ max\_iterations},$

$\text{stopped}_{k,l}$ = $1 \text{ for all } k \in \K, l \in  \mathcal{L}$
$f, u_{\text{old}}$ = $0$
$\alpha_{\text{old}},\xi_{\text{old}}, \text{full\_iter}$ = $0$
while $\exists k,l: \text{stopped}_{k,l} \neq 0 \text{ and } \text{full\_iter} < \text{max\_iterations}$ do
    for $k \in K$ do
        for $l \in L$ do
            $\text{res}$ = $\A^{\eta}_{k,l} f - g^{\delta}_{k,l}$
            if $\lvert \text{res} \rvert$ $\leq$ $\tau_{k,l} \cdot (\eta_{k,l} + \delta_{k,l})$ then
                $\text{stopped}_{k,l}$ = $0$
            else
                $u_{\text{new}}$ = $(\A^{\eta}_{k,l})^* \cdot \text{res}$
                if $\lVert u_{\text{new}} \rVert^2$ > $0$ then
                    $\alpha_{\text{new}}$ = $\text{res} \cdot g^{\delta}_{k,l}$
                    $\xi_{\text{new}}$ = $\lvert \text{res} \rvert \cdot (\eta_{k,l} + \delta_{k,l}) $
    	            # determine $\tilde{f} = P_{H(u_{\text{new}},\alpha_{\text{new}}+\xi_{\text{new}})}(f)$    	    
                    $\tilde{f}$ = $f - \frac{\lvert \text{res} \rvert (\lvert \text{res} \rvert - (\eta_{k,l} + \delta_{k,l}))}{\lVert u_{\text{new}} \rVert^2}  u_{\text{new}}$ 
                    # determine $f = P_{H(u_{\text{new}},\alpha_{\text{new}}+\xi_{\text{new}}) \cap H_{\gtrless}(u_{\text{old}},\alpha_{\text{old}}\pm \xi_{\text{old}})}(\tilde{f})$        
                    if $\tilde{f} \in H(u_{\text{old}},\alpha_{\text{old}}+\xi_{\text{old}})$ then
                        $f$ = $\tilde{f}$
                    else if $\lVert u_{\text{new}} \rVert^2 \cdot \lVert u_{\text{old}} \rVert^2 - \langle u_{\text{new}}, u_{\text{old}} \rangle^2 \neq 0$
                        $\text{Decide whether}$
                        1. $\tilde{f} \in H_{>}(u_{\text{old}},\alpha_{\text{old}}+ \xi_{\text{old}}) \text{ or}$
                        2. $\tilde{f} \in H_{<}(u_{\text{old}},\alpha_{\text{old}} - \xi_{\text{old}})$                                
                        $t$ = $\frac{\langle u_{\text{old}}, \tilde{f} \rangle - (\alpha_{\text{old}} \pm \xi_{\text{old}})}{\lVert u_{\text{new}} \rVert^2 \cdot \lVert u_{\text{old}} \rVert^2 - \langle u_{\text{new}}, u_{\text{old}} \rangle^2}$                
                        $f$ = $\tilde{f} + \langle u_{\text{new}}, u_{\text{old}} \rangle \cdot t \cdot u_{\text{new}} - \lVert u_{\text{new}} \rVert^2 \cdot t \cdot u_{\text{old}} $
                    else:
                       $f$ = $\tilde{f}$
                    $\text{set negative values in } f  \text{ to zero}$
                    $\alpha_{\text{old}}$ = $\alpha_{\text{new}}$
                    $\xi_{\text{old}}$ = $\xi_{\text{new}}$
                    $u_{\text{old}}$ = $u_{\text{new}}$
        end
    end
    $\text{full\_iter}$ = $\text{full\_iter} + 1$
end
$\mathbf{Output}$: $f$
\end{lstlisting}

\vspace*{2ex}

\subsection{Motion Estimation via Landmark Detection}
\label{ss:landmark}
For the purpose of a proof of concept, we consider homogeneous rectangular objects and restrict ourselves to the rather simple case of an affine linear motion $\Gamma_\theta$ 
with constant speed of the form
\begin{align}
\label{eq:affineMotion}
\begin{split}
\Gamma_\theta x &= A_\theta x + b_\theta, \quad \theta \in S_1, \\
A_\theta &= A(t_\theta) = I + \frac{t_\theta}{N-1} (A - I), \\
b_\theta &= b(t_\theta) =  \frac{t_\theta}{N-1} b
\end{split}
\end{align}
for the unit circle $S_1$, the identity matrix $I \in \R^{2\times 2}$, $A \in \R^{2 \times 2}$, $b \in \R^2$, $N \in \N$ the number of time points (i.e., the number of scanning angles $\varphi$), and $t_\theta \in \{0, \ldots, N-1\}$.

In this case, it is sufficient to know the coordinates of four landmarks of the object, each at the start and end time.
By plugging these into~\eqref{eq:affineMotion} we get a linear system of eight equations containing six unknowns for determining the motion. The motion estimation then consists in computing the least-squares solution of this overdetermined system.

\begin{figure}[t]
\centering
\caption{CNN6 architecture}
\includegraphics[width=\textwidth]{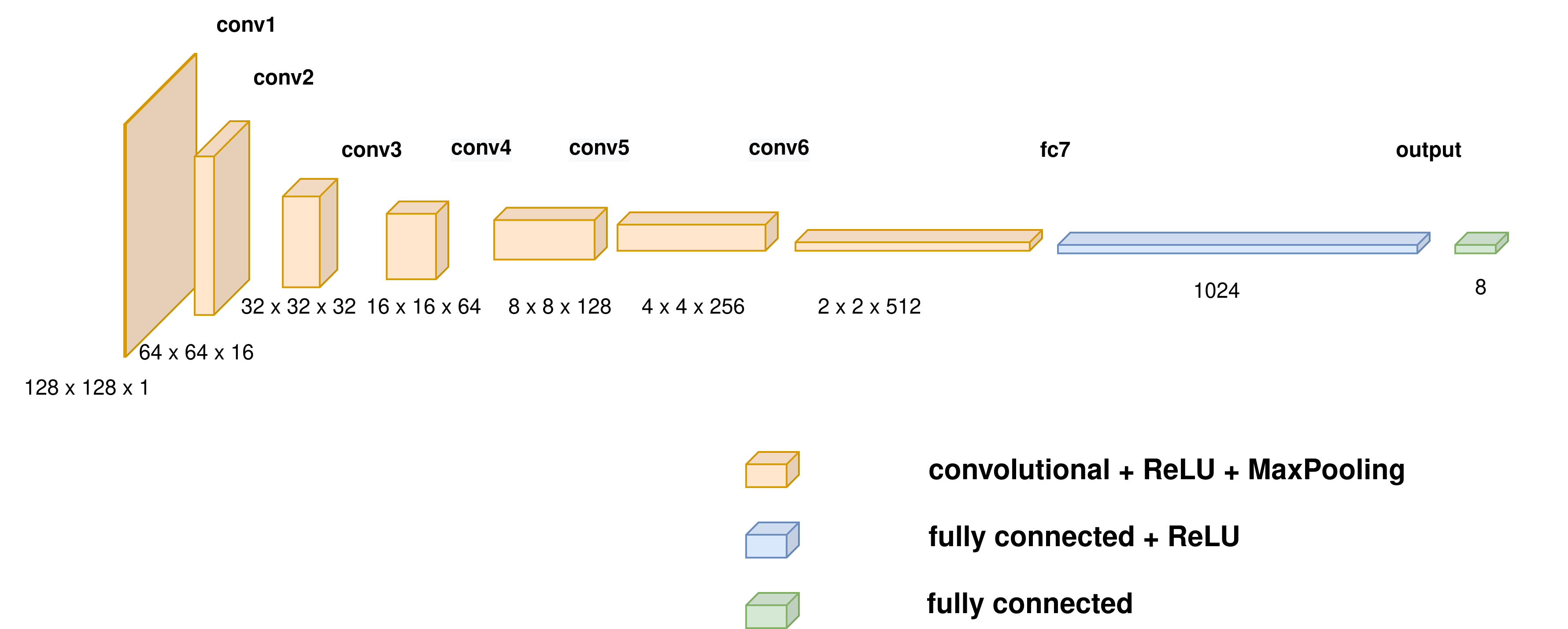}
\label{fig04:cnn6}
\end{figure}

\vspace*{2ex}

\subsubsection{Data-driven landmark detection}
\label{sec:cnn6}
As a proof-of-concept, we trained a small network for the purpose of landmark detection for randomly generated rectangles, i.e., it is trained to determine the coordinates of the four corners of a rectangle.

\vspace*{2ex}

\paragraph*{Architecture}
The implemented architecture, denoted by CNN6 (see Figure~\ref{fig04:cnn6}), was inspired by \cite{cnn6} where it was originally used for facial landmark detection.


\vspace*{2ex}

\paragraph*{Training Data Set}
\label{ss:dataset}
The network was trained in two steps with two different data sets, each containing 500 pairs of images of size $128 \times 128$ as an input for the network and the corresponding coordinates of the landmarks as the output: 
For pre-training, a data set with uncorrupted rectangular phantoms was used. 

After pre-training, the net was trained with a data set containing the reconstructions of the inital state of moving rectangles by the RESESOP-Kaczmarz method with three iterations. 

\vspace*{2ex}

\paragraph*{Hyperparameters}

As a regularization technique, we used early stopping with a patience of 200 epochs and allowed a maximum of 10~000 epochs. \\
We remark that the network in the original article~\cite{cnn6} was trained for 120~000 epochs, leading to our choice of this relatively high patience. 

We used Adam \cite{adam} as an optimizer with an initial step size of $10^{-6}$, a batch size of 8, and the mean squared error as a loss function. 


\vspace*{2ex}

\paragraph*{Training Results}
\label{ss:trainRes}

The pre-training took 10 000 epochs and already reached a good precision as can be seen in figure~\ref{fig04:trainLoss}.

\begin{figure}[t]
\centering
\caption{Pre-training (left) and training loss (right)}
\includegraphics[width=0.46\textwidth]{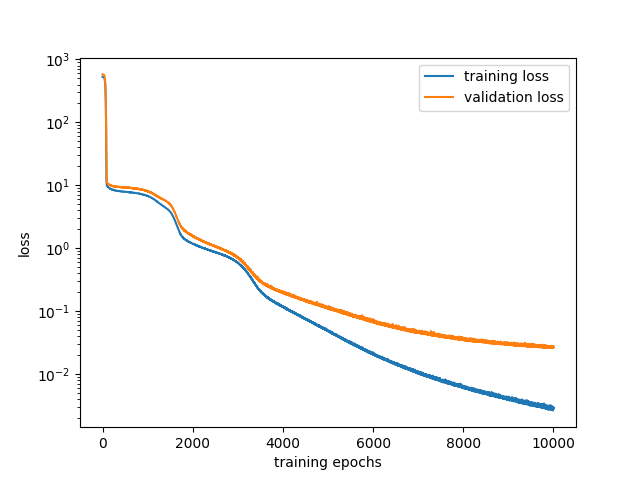}
\includegraphics[width=0.49\textwidth]{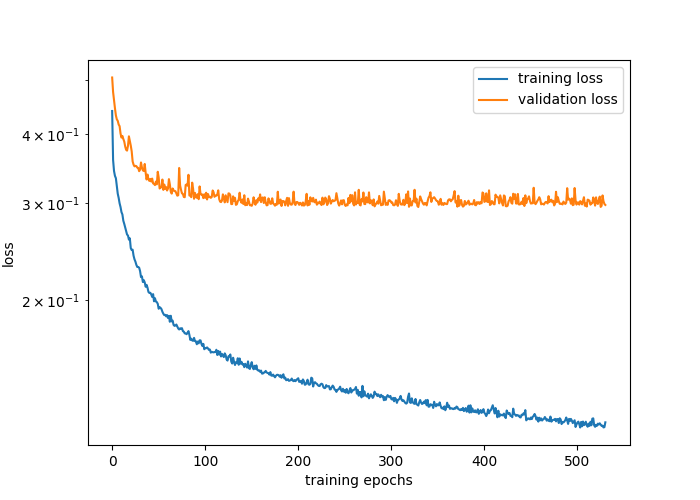}
\label{fig04:trainLoss}
\end{figure}

The second phase of the training with the reconstructed data set only took 500 epochs until the early stopping ended the training. 
Since the reconstructions are not as sharp and precise as the constructed phantoms, it is not surprising that the validation loss does not get as small as on the pre-training data set.

\noindent


\noindent





\noindent



\vspace*{2ex}

\subsection{Dynamic Filtered Backprojection}
\label{ss:dynFBP}

To derive a dynamic filtered backprojection algorithm for affine deformations, the method of the approximate inverse can be used.
A detailed introduction to the method of the approximate inverse can be found in \cite{ApproximateInverse}.
This method can be used to solve operator equations
\begin{align}
\A f = g
\end{align}

\noindent
for a linear invertible operator $\A : X \to Y$ with $X = L^2(\Omega_1, \mu_1)$ and $Y = L^2(\Omega_2, \mu_2)$ for $\Omega_i \subset \R^{n_i}$ open, bounded domains, $\mu_i$ measures on $\Omega_i$, $i\in \{1,2\}$ and $L^2$ the space of Lebesgue square-integrable functions. 

To reconstruct $f$, a smoothed version $f^\gamma$ with

\begin{align}
\label{eq:f_gamma}
f(x) \approx f^\gamma (x) = \langle f, \delta^\gamma(\cdot, x) \rangle_X
\end{align}

\noindent
is calculated for a suitable mollifier $\delta^\gamma(y,x) \in L^2(\Omega_1 \times \Omega_1, \mu_1 \times \mu_1)$.

\vspace*{2ex}

\begin{defi}{(Mollifier)}

\noindent
A function $\delta^\gamma (\cdot, x) \in L^2(\R^{n_1},\mu_1)$ for $x \in \R^{n_1}$ is called a \textit{mollifier} if
\begin{itemize}
\item for all $\gamma > 0$ it holds
\begin{align*}
\int_{\R^{n_1}} \delta^\gamma (y,x) d\mu_1(y) = 1.
\end{align*}

\item for $\gamma \to 0$ $f_\gamma$ converges to $f$ in $L^2(\Omega_1, \mu_1)$.
\end{itemize}
\end{defi}

\vspace*{2ex}

\noindent
With equation~\eqref{eq:f_gamma} it is not yet possible to calculate $f_\gamma$ since the exact solution $f$ appears in the expression. This problem can be overcome: 

For a mollifier $\delta^\gamma(\cdot,x)$ and a solution $\Psi^\gamma(x) \in Y$ of the auxiliary problem
\begin{align}
\label{eq:auxiliaryProblem}
\A^* \Psi^\gamma(x) = \delta^\gamma(\cdot,x)
\end{align}

\noindent 
we find that
\begin{align*}
f^\gamma(x) 
= \langle f, \delta^\gamma(\cdot, x) \rangle_X
= \langle f, \A^* \Psi^\gamma(x) \rangle_X
= \langle \A f, \Psi^\gamma(x) \rangle_Y
= \langle g, \Psi^\gamma(x) \rangle_Y.
\end{align*}

\vspace*{2ex}

\begin{remark}
Since the operator $\A$ is invertible, $\delta^\gamma(\cdot,x)$ is in the range of the adjoint operator $\A^*$ and therefore the auxiliary problem~\eqref{eq:auxiliaryProblem} has a solution $\Psi^\gamma(x) \in Y$.
\end{remark}

\vspace*{2ex}

Now the approximate inverse depending on the chosen mollifier can be defined as follows:

\vspace*{2ex}

\begin{defi}{(Approximate Inverse)}

\noindent
Let $\A: X \to Y$ be a linear, bounded and invertible operator and $\delta^\gamma$ be a mollifier.
Then the linear mapping $\tilde{\A}_\gamma : Y \to X$ with
\begin{align*}
\tilde{\A}_\gamma g(x) = \langle g, \Psi^\gamma(x) \rangle_Y
\end{align*}

\noindent
is called the \textit{approximate inverse} of $\A$.
The solution $\Psi^\gamma(x)$ of~\eqref{eq:auxiliaryProblem} is called the \textit{reconstruction kernel} associated to $\delta^\gamma$.

\end{defi}

\vspace*{2ex}

Under certain conditions on $ \Psi^\gamma(x)$ the approximate inverse 
$\tilde{\A}_\gamma$ is continuous and 
$\tilde{\A}_\gamma g$ converges pointwise to 
$\A^{-1}_\gamma g$ for $\gamma \to 0$. Further details can be found in the second chapter of \cite{ApproximateInverse}.

An advantage of the computation of reconstruction kernels is that the auxiliary problem~\eqref{eq:auxiliaryProblem} does not depend on the noisy measured data $g^\delta$. Therefore the reconstruction kernel is independent of noise.

On the other hand, the computation of the reconstruction kernel is time-consuming since it needs to be calculated for each reconstruction point $x$ in the domain of $f$.
However, it is possible to simplify the calculation under certain invariance conditions of operator $\A$. This is shown in the next lemma (see Lemma 3.2 in \cite{HahnDynamicFBP} and Theorem 2.6 in \cite{ApproximateInverse}).

\vspace*{2ex}

\begin{lemma}
\label{lem:simplify}
Let $T_1^x: X \to X$ and $T_2^x: Y \to Y$ for $x \in \Omega_1$ be linear operators with
\begin{align}
\label{eq:condSimplify}
T_1^x \A^* = \A^* T_2^x
\end{align}

\noindent
and let the mollifier $\delta^\gamma$ be generated by $T_1^x$, meaning that there exists $x^* \in \Omega_1$ with
\begin{displaymath}
\delta^\gamma(\cdot, x) = T_1^x \delta^\gamma(\cdot,x^*).
\end{displaymath}

\noindent
If $ \Psi^\gamma(x^*)$ solves $\A^* \Psi^\gamma(x^*) = \delta^\gamma(\cdot, x^*)$, then 
$\Psi^\gamma(x) = T_2^x  \Psi^\gamma(x^*)$
solves 
\begin{align*}
\A^* \Psi^\gamma(x) = \delta^\gamma(\cdot, x)
\end{align*}

\noindent
for any $x \in \Omega_1$.

\end{lemma}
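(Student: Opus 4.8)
The statement is a direct verification built entirely on the intertwining relation~\eqref{eq:condSimplify}, so the plan is essentially to exhibit the claimed solution and check it by a short chain of equalities. First I would simply \emph{define} the candidate $\Psi^\gamma(x) \coloneqq T_2^x \Psi^\gamma(x^*)$; this is meaningful since $\Psi^\gamma(x^*) \in Y$ by hypothesis and $T_2^x$ maps $Y$ into $Y$.

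Next I would apply the adjoint $\A^*$ to this candidate and move the operators past one another using the three available ingredients in turn: the commutation hypothesis~\eqref{eq:condSimplify} to rewrite $\A^* T_2^x$ as $T_1^x \A^*$; the assumption that $\Psi^\gamma(x^*)$ solves the auxiliary problem at $x^*$, i.e.\ $\A^* \Psi^\gamma(x^*) = \delta^\gamma(\cdot, x^*)$; and finally the generating property $\delta^\gamma(\cdot, x) = T_1^x \delta^\gamma(\cdot, x^*)$. Concretely,
\begin{align*}
\A^* \Psi^\gamma(x) = \A^* T_2^x \Psi^\gamma(x^*) = T_1^x \A^* \Psi^\gamma(x^*) = T_1^x \delta^\gamma(\cdot, x^*) = \delta^\gamma(\cdot, x),
\end{align*}
which is precisely the auxiliary problem~\eqref{eq:auxiliaryProblem} at the point $x$. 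Since $x \in \Omega_1$ was arbitrary, this completes the argument.

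The honest answer is that there is no hard step here: all the content of the lemma sits in the \emph{hypotheses} --- that one can find a commuting pair $(T_1^x, T_2^x)$ and a family of mollifiers generated by $T_1^x$ --- rather than in the proof. The only point worth a sentence of care is solvability of the auxiliary equation at $x$, i.e.\ that $\delta^\gamma(\cdot,x)$ lies in the range of $\A^*$; but this is already ensured by invertibility of $\A$, as recorded in the remark following~\eqref{eq:auxiliaryProblem}, and in any event the computation above exhibits $T_2^x\Psi^\gamma(x^*)$ as an explicit solution, so nothing further needs to be verified. One may also note that the identity uses only linearity of $T_1^x$ and $T_2^x$ and not any continuity, so it holds at the purely algebraic level.
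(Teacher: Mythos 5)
Your argument is correct and is essentially the paper's own proof: the same three-step chain of equalities (generating property of the mollifier, the auxiliary equation at $x^*$, and the intertwining relation~\eqref{eq:condSimplify}), merely written starting from $\A^*\Psi^\gamma(x)$ rather than from $\delta^\gamma(\cdot,x)$. Nothing is missing.
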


\vspace*{2ex}

\begin{proof}
It holds
\begin{align*}
\delta^\gamma(\cdot, x) 
= T_1^x \delta^\gamma(\cdot,x^*)
= T_1^x \A^* \Psi^\gamma(x^*)
= \A^* T_2^x \Psi^\gamma(x^*).
\end{align*}

\noindent
Therefore, $T_2^x \Psi^\gamma(x^*)$ solves the auxiliary problem $\A^* \Psi^\gamma(x) = \delta^\gamma(\cdot, x)$.
\end{proof}

\vspace*{2ex}

If Lemma~\eqref{lem:simplify} can be applied, only one auxiliary problem has to be solved. In this case, the method of the approximate inverse can be formulated as 
\begin{align}
\label{eq:ApproxInvSimplified}
\tilde{\A}_\gamma g(x) = \langle g, T_2^x \Psi_\gamma(x^*) \rangle_Y
\end{align}

\noindent
Now we can apply the method of the approximate inverse to dynamic computerized tomography as it is done in \cite{HahnDynamicFBP}:

\vspace*{2ex}

\begin{lemma}[Application to Affine Deformations in Dynamic CT]

Let the motion be given by
\begin{align*}
\Gamma_\theta x = C_\theta x + b_\theta, \quad \theta \in S_1,
\end{align*}

\noindent
for $b_\theta \in \R^2$ and $C_\theta \in \R^{2 \times 2}$ with continuously differentiable entries, $\det C_\theta \neq 0$ for all measured angles $\theta =  (\cos(\varphi), \sin(\varphi))^T$ and
\begin{align*}
h(\theta) := (C_\theta^{-*} \theta)_1 \cdot \frac{\partial}{\partial \varphi} (C_\theta^{-*} \theta)_2 - (C_\theta^{-*} \theta)_2 \cdot \frac{\partial}{\partial \varphi} (C_\theta^{-*} \theta)_1 \neq 0,
\end{align*}
where $C_\theta^{-*}:= \big(C_\theta^{-1}\big)^*$.

Then the auxiliary problem can be solved analytically by using the inverse dynamic Radon operator
\begin{align*}
(R^\Gamma)^* \Psi_x^\gamma = \delta_x^\gamma
= (R^\Gamma)^{-1} (R^\Gamma) \delta_x^\gamma
=  (R^\Gamma)^* \left( \frac{1}{4 \pi}
\lvert \text{det} C_\theta \rvert^2 \lvert h(\theta) \rvert \ell^{-1}(R^\Gamma) \delta_x^\gamma \right)
\end{align*}

yielding the reconstruction kernel
\begin{align}
\label{eq:recKernel}
\Psi_x^\gamma(\theta,s) = \frac{1}{4 \pi}
\lvert \text{det} C_\theta \rvert^2 \lvert h(\theta) \rvert \ell^{-1}(R^\Gamma) \delta_x^\gamma
\end{align}

with the Riesz potential $\ell$ defined by $\mathcal{F} \ell^{-1} g(\sigma) = \lvert \sigma \rvert \mathcal{F} g(\sigma)$ and the two-dimensional Fourier transform $\mathcal{F}$ given by $\mathcal{F} f (\xi) = \frac{1}{2 \pi} \int_{\R^2} f(x) e^{-ix^T\xi} \, \mathrm{d}x$.
\end{lemma}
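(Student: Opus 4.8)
The plan is to reduce the claim to an explicit inversion formula for the dynamic Radon operator under affine motion, namely
\begin{equation}
\label{eq:RGammaInverse}
\big(R^\Gamma\big)^{-1} g = \big(R^\Gamma\big)^{*}\Big( \tfrac{1}{4\pi}\,\lvert \det C_\theta \rvert^{2}\,\lvert h(\theta)\rvert\,\ell^{-1} g \Big).
\end{equation}
Once~\eqref{eq:RGammaInverse} is available the lemma follows immediately: substituting the identity $\delta_x^\gamma = \big(R^\Gamma\big)^{-1}\big(R^\Gamma\big)\delta_x^\gamma$ into the auxiliary problem~\eqref{eq:auxiliaryProblem} with $\A = R^\Gamma$ reproduces exactly the chain of equalities displayed in the statement and exhibits $\Psi_x^\gamma$ from~\eqref{eq:recKernel} as a solution of $\big(R^\Gamma\big)^{*}\Psi_x^\gamma = \delta_x^\gamma$. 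Beforehand one should record, as in the remark preceding the definition of the approximate inverse, that the auxiliary problem is solvable at all, since the mollifier $\delta_x^\gamma$ is smooth and hence lies in the range of $\big(R^\Gamma\big)^{*}$.

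To prove~\eqref{eq:RGammaInverse} I would first relate $R^\Gamma$ to the static Radon transform. Substituting $y = \Gamma_\theta x = C_\theta x + b_\theta$ in the defining integral gives $\mathrm{d}x = \lvert\det C_\theta\rvert^{-1}\,\mathrm{d}y$ and
\[
s - x^{T}\theta = \big(s + b_\theta^{T} C_\theta^{-*}\theta\big) - y^{T} C_\theta^{-*}\theta .
\]
Writing $C_\theta^{-*}\theta = \lvert C_\theta^{-*}\theta\rvert\,\omega(\theta)$ with a unit vector $\omega(\theta)$ and absorbing the scalar $\lvert C_\theta^{-*}\theta\rvert^{-1}$ produced by rescaling the argument of the delta distribution, one sees that $R^\Gamma f$ equals, up to an explicit positive $\theta$-dependent weight, the static transform $Rf$ reparametrised in the angular and offset variables: the direction becomes $\omega(\theta)$ and the offset an affine function of $s$. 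This reparametrisation is where the hypothesis $\det C_\theta\neq 0$ is used; moreover the angular map $\theta\mapsto\omega(\theta)$ is a (local) diffeomorphism of the set of measured directions with Jacobian $\lvert h(\theta)\rvert\,\lvert C_\theta^{-*}\theta\rvert^{-2}$, which is exactly where $h$ and the hypothesis $h(\theta)\neq 0$ enter.

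The final step is to transport the classical two-dimensional filtered backprojection identity $f = \tfrac{1}{4\pi}\,R^{*}\ell^{-1}R f$ through this change of variables, using the scaling rule $\ell^{-1}\big[v(a\,\cdot + c)\big] = \lvert a\rvert\,(\ell^{-1}v)(a\,\cdot + c)$ for the Riesz potential to accommodate the rescaled offset, and using that the dynamic backprojection $\big(R^\Gamma\big)^{*}$ carries the weight $\lvert \det C_\theta\rvert^{-1} = \lvert\det D\Gamma_\theta^{-1}\rvert$. Collecting the Jacobian of the offset substitution, the scaling factor of $\ell^{-1}$, the angular Jacobian $\lvert h(\theta)\rvert\,\lvert C_\theta^{-*}\theta\rvert^{-2}$ and the weight built into $\big(R^\Gamma\big)^{*}$, all powers of $\lvert C_\theta^{-*}\theta\rvert$ cancel and the multiplier that remains in front of $\ell^{-1}(R^\Gamma)$ is precisely $\tfrac{1}{4\pi}\lvert\det C_\theta\rvert^{2}\lvert h(\theta)\rvert$, which gives~\eqref{eq:RGammaInverse} and hence~\eqref{eq:recKernel}. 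I expect this bookkeeping to be the main obstacle: one must track carefully how the four weight contributions combine and verify that, under the stated non-degeneracy conditions, the reparametrisation of $S_1$ is admissible and no weight degenerates. The computation can be carried out along the lines of~\cite{HahnDynamicFBP}, from which~\eqref{eq:recKernel} is adapted.
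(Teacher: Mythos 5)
Your proposal is correct in substance, and it reaches the key identity (the exact inversion formula $f=\tfrac{1}{4\pi}(R^\Gamma)^*\bigl(\lvert\det C_\theta\rvert^2\lvert h(\theta)\rvert\,\ell^{-1}R^\Gamma f\bigr)$, which is then applied to $g=R^\Gamma\delta_x^\gamma$ exactly as in the statement) by a genuinely different route than the paper. The paper simply defers to Section 4 of \cite{HahnDynamicFBP}, where the inversion formula is obtained by first proving a dynamic Fourier slice theorem, $\mathcal{F}R^\Gamma_\theta f(\sigma)=\sqrt{2\pi}\,\lvert\det C_\theta\rvert^{-1}e^{i\sigma(C_\theta^{-1}b_\theta)^T\theta}\mathcal{F}f(\sigma C_\theta^{-*}\theta)$, and then performing Fourier inversion of $f$ with the change of variables $\xi=\sigma C_\theta^{-*}\theta$, whose Jacobian $\sigma h(\theta)$ is where $h$ enters. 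You instead stay in the sinogram domain: the substitution $y=\Gamma_\theta x$ exhibits $R^\Gamma f(\theta,s)$ as a weighted, rebinned static transform $\lvert\det C_\theta\rvert^{-1}\lVert C_\theta^{-*}\theta\rVert^{-1}Rf\bigl(\omega(\theta),(s+(C_\theta^{-1}b_\theta)^T\theta)/\lVert C_\theta^{-*}\theta\rVert\bigr)$, and you transport the static formula $f=\tfrac{1}{4\pi}R^*\ell^{-1}Rf$ through this reparametrisation; your bookkeeping is right (the offset in the backprojection collapses to $x^T\omega(\theta)$, the Riesz scaling contributes $\lVert C_\theta^{-*}\theta\rVert^{-1}$, the angular Jacobian is $\lvert h(\theta)\rvert\lVert C_\theta^{-*}\theta\rVert^{-2}$, and the norms cancel leaving $\tfrac{1}{4\pi}\lvert\det C_\theta\rvert^2\lvert h(\theta)\rvert$). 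Your route is more elementary and makes the geometric role of $h$ as the Jacobian of $\theta\mapsto\omega(\theta)=C_\theta^{-*}\theta/\lVert C_\theta^{-*}\theta\rVert$ transparent, while Hahn's Fourier-slice route additionally yields the slice identity itself, which the paper reuses later to compute the explicit kernel for the Gaussian mollifier. One caveat applies equally to both arguments and deserves explicit mention in yours: exact inversion requires the data-completeness assumption that the directions $\omega(\theta)$ sweep out all of $S^1$ (with the correct multiplicity, using the evenness of the static transform) --- in the Fourier route this is the requirement that $\xi=\sigma C_\theta^{-*}\theta$ covers $\R^2$. This hypothesis is assumed in \cite{HahnDynamicFBP} but not listed in the lemma, so when you ``verify that the reparametrisation of $S^1$ is admissible'' you should state it as an assumption rather than derive it from $\det C_\theta\neq 0$ and $h(\theta)\neq 0$, which only give a local diffeomorphism.
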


\vspace*{2ex}

A derivation of this result can be found in Section 4 of \cite{HahnDynamicFBP}.

\vspace*{2ex}

\noindent
Since the dynamic Radon operator fulfills the invariance condition
\begin{displaymath}
T^x (R^\Gamma)^* = (R^\Gamma)^* T^{(0,(C_\theta^{-1} x)^T \theta)}
\end{displaymath}
from Lemma~\ref{lem:simplify} for the shift operators
\begin{align*}
T^x f(y) &= f(y-x) \\
T^{\left(0,(C_\theta^{-1} x)^T \theta\right)} g(\theta,s) &= g\left(\theta, s - (C_\theta^{-1} x)^T \theta \right),
\end{align*}
it is enough to compute the reconstruction kernel $\Psi^\gamma_0 = \Psi^\gamma$ only for the point $x^* = 0$ for a mollifier with $\delta^\gamma(\cdot,x) = T^x \delta^\gamma(\cdot,0) = \delta^\gamma (\cdot - x,0)$, e.g., the Gaussian $\delta_G^\gamma$ with 
\begin{align*}
\delta_G^\gamma(y,x) = \frac{1}{2\pi \gamma^2} \exp \left( - \frac{\lVert y - x \rVert^2}{ (2 \gamma^2)} \right).
\end{align*}

This leads to the dynamic filtered backprojection formula
\begin{align}
\label{eq:dynFBP}
f^\gamma(x) 
&= \langle g, T^{\left(0,(C_\theta^{-1} x)^T \theta\right)} \Psi^\gamma \rangle_{L^2(S^1 \times \R)} \\
&= \int_{S^1} \int_{\R} g(\theta,s) \Psi^\gamma\left(\theta, s-(C_\theta^{-1} x)^T \theta\right)\, \mathrm{d}s \, \mathrm{d}\theta.
\end{align}


\vspace*{2ex}

\begin{theorem}{(Reconstruction Kernel for the Gaussian Mollifier)}
The reconstruction kernel for the Gaussian mollifier is given by
\begin{align*}
\psi^\gamma(\theta, s) =  \frac{\lvert \det C_\theta \rvert \lvert h(\theta) \rvert}{4 \pi^2 \gamma^2 \lVert C_\theta^{-*} \theta \rVert^2}  \left( 1 -  \frac{\sqrt{2}\left[s + (C_\theta^{-1} b_\theta)^T \theta\right]}{\gamma \lVert C_\theta^{-*} \theta \rVert} D\left( \frac{s + (C_\theta^{-1} b_\theta)^T \theta}{\sqrt{2} \gamma \lVert C_\theta^{-*} \theta \rVert} \right) \right)
\end{align*}
with the Dawson integral
\begin{align*}
D(s) = e^{-s^2} \int_0^s e^{t^2} \, \mathrm{d}t.
\end{align*}
\end{theorem}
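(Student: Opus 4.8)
The plan is to evaluate the analytic reconstruction-kernel formula~\eqref{eq:recKernel} at the reference point $x^\ast = 0$ for the Gaussian mollifier, i.e.\ to compute
\[
\psi^\gamma(\theta,s) \;=\; \frac{1}{4\pi}\,\lvert\det C_\theta\rvert^2\,\lvert h(\theta)\rvert\;\ell^{-1}\!\big(R^\Gamma\delta_G^\gamma(\cdot,0)\big)(\theta,s),
\]
by carrying out the two operations on the right-hand side explicitly: the dynamic Radon transform of a Gaussian, and the Riesz potential $\ell^{-1}$ acting in the $s$-variable.

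First I would compute $R^\Gamma_\theta\delta_G^\gamma(\cdot,0)$. Substituting $y=\Gamma_\theta x = C_\theta x + b_\theta$ in the definition of $R^\Gamma$ and using $(C_\theta^{-1}(y-b_\theta))^{\!T}\theta = y^{T}C_\theta^{-\ast}\theta - (C_\theta^{-1}b_\theta)^{\!T}\theta$, the projection becomes $\lvert\det C_\theta\rvert^{-1}$ times the integral of the rotationally symmetric Gaussian $\delta_G^\gamma(\cdot,0)$ against $\delta\big(s + (C_\theta^{-1}b_\theta)^{\!T}\theta - y^{T}C_\theta^{-\ast}\theta\big)$. Writing $C_\theta^{-\ast}\theta = \lVert C_\theta^{-\ast}\theta\rVert\,\hat\omega$ with $\hat\omega$ a unit vector, the $\delta$-distribution rescales by $\lVert C_\theta^{-\ast}\theta\rVert^{-1}$, and the ordinary Radon projection of $\delta_G^\gamma(\cdot,0)$ in the unit direction $\hat\omega$ is the one-dimensional Gaussian $(\sqrt{2\pi}\,\gamma)^{-1}e^{-u^2/(2\gamma^2)}$. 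Altogether $R^\Gamma_\theta\delta_G^\gamma(\cdot,0)$ is a Gaussian in $s$, centred at $-(C_\theta^{-1}b_\theta)^{\!T}\theta$, of width $\gamma\lVert C_\theta^{-\ast}\theta\rVert$, with overall factor $\big(\sqrt{2\pi}\,\gamma\,\lvert\det C_\theta\rvert\,\lVert C_\theta^{-\ast}\theta\rVert\big)^{-1}$. (Equivalently, one may read this off a dynamic Fourier slice theorem.)

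Next I would apply $\ell^{-1}$ to this shifted Gaussian. By $\mathcal{F}\ell^{-1}g(\sigma)=\lvert\sigma\rvert\mathcal{F}g(\sigma)$ and the Fourier shift property this reduces to the real integral $2\int_0^\infty\sigma\,e^{-a\sigma^2}\cos(\beta\sigma)\,\mathrm{d}\sigma$ with $a=\tfrac12\gamma^2\lVert C_\theta^{-\ast}\theta\rVert^2$ and $\beta = s+(C_\theta^{-1}b_\theta)^{\!T}\theta$; this is the crux of the computation. Writing it as $2\,\partial_\beta\!\int_0^\infty e^{-a\sigma^2}\sin(\beta\sigma)\,\mathrm{d}\sigma$, rescaling $\sigma$, and invoking the standard identity $\int_0^\infty e^{-t^2}\sin(xt)\,\mathrm{d}t = D(x/2)$ gives $\int_0^\infty\sigma\,e^{-a\sigma^2}\cos(\beta\sigma)\,\mathrm{d}\sigma = \tfrac{1}{2a}D'\!\big(\tfrac{\beta}{2\sqrt a}\big)$, and the Dawson differential equation $D'(x)=1-2xD(x)$ then produces precisely the bracket $1 - \tfrac{\sqrt2\,\beta}{\gamma\lVert C_\theta^{-\ast}\theta\rVert}\,D\!\big(\tfrac{\beta}{\sqrt2\,\gamma\lVert C_\theta^{-\ast}\theta\rVert}\big)$ after substituting back the value of $a$.

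Finally I would multiply the result of the previous two steps by the prefactor $\tfrac{1}{4\pi}\lvert\det C_\theta\rvert^2\lvert h(\theta)\rvert$ from~\eqref{eq:recKernel}: the powers of $\lvert\det C_\theta\rvert$ reduce to a single one, the powers of $\lVert C_\theta^{-\ast}\theta\rVert$ combine to $\lVert C_\theta^{-\ast}\theta\rVert^{-2}$, and the numerical constants ($\tfrac14$, the two $\sqrt{2\pi}$ factors, the factor $2$) collapse to $\tfrac{1}{4\pi^2}$, yielding the claimed expression. The main obstacle is the middle step — recognizing the oscillatory Gaussian integral as a derivative of the Dawson function and exploiting its ODE — together with the bookkeeping needed to keep the Fourier normalization constants (one- versus two-dimensional conventions) and the non-unit length of $C_\theta^{-\ast}\theta$ straight, since that length enters both the Gaussian width and the Jacobian factors; everything else is routine.
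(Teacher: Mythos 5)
Your proposal is correct and follows essentially the same route as the computation the paper refers to (the static-case calculation of \cite{LouisDiffusionRec} adapted via \eqref{eq:recKernel}): evaluate the kernel formula for the Gaussian mollifier, reduce to the one-dimensional integral $\int_0^\infty \sigma e^{-a\sigma^2}\cos(\beta\sigma)\,\mathrm{d}\sigma$, and express it through the Dawson function, with all constants ($a=\tfrac12\gamma^2\lVert C_\theta^{-*}\theta\rVert^2$, $\beta=s+(C_\theta^{-1}b_\theta)^T\theta$, final prefactor $\tfrac{1}{4\pi^2}$) checking out. The only cosmetic differences are that you compute $R^\Gamma_\theta\delta_G^\gamma$ directly in the spatial domain rather than via the dynamic Fourier slice theorem, and you evaluate the key integral through the Dawson identity and its ODE $D'(x)=1-2xD(x)$ instead of integration by parts plus formulas 7.4.6/7.4.7 of \cite{mathematicalFunctions}.
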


\vspace*{2ex}

\begin{proof}
    The computation can be done as in the static case~\cite{LouisDiffusionRec} by using \eqref{eq:recKernel} and equations 7.4.6 and 7.4.7 from~\cite{mathematicalFunctions}. The result can also be found in~\cite{HahnDis}.
\end{proof}

\vspace*{2ex}

By applying the same kind of discretization as in the static case,  we altogether obtain the following pseudo code:

\vspace*{2ex}

\begin{lstlisting}[numbers=left, caption={pseudo code for the dynamic filtered backprojection}, label=alg1:back, mathescape=true, language=Pascal, basicstyle = \footnotesize, lineskip=2pt]
$\mathbf{Input}$: $p \text{: number of angles, } \tilde{q} \text{: uneven number of offset-values, } \Psi^\gamma \text{: dynamic filter,}$
$\text{numpix: number of pixels, }  g(\theta_j,s_k) \text{: sinogram for }\theta_i = (\cos(\varphi_i), \sin(\varphi_i))^T, $ 
$ \varphi_i = \frac{i\pi}{p}, s_k = hk \text{ for } h=1/q, k=0,\ldots,\tilde{q}-1, q = (\tilde{q}-1)/2,  i = 0,...,p-1$

1.for $i$ = $0,\ldots,p-1$ do
  	for $k$ = $0, \ldots, \tilde{q}-1$ do
  		$v_{i,k}$ = $h \sum_{j = 0}^{\tilde{q}-1} \Psi^\gamma(s_k-s_j)g(\theta_i,s_j)$
     end	
  end
  
2.for $i$ = $0, \ldots, \text{numpix}-1$ do
	$x_1$ = $1 - \frac{2}{\tilde{q}}(i + 1/2)$
 	for $j$ = $0, \ldots, \text{numpix}-1$ do
 		$x_2$ = $-1 + \frac{2}{\tilde{q}}(j + 1/2)$
 		$\text{sum}$ = 0
 		if $x_1^2 + x_2^2 > 1$ then
 			pass
 		else
 			for $l$ = $0,\ldots,p-1$ do
 				$s$ = $(C_\theta^{-1} x)^T \theta_l$
 				$k$ = $\lfloor sq \rfloor + q$
 				$\mu$ = $sq - k + q$
 				$\text{sum}$ += $(1-\mu)v_{l,k} + \mu v_{l,k+1}$
 			end
 		$f^{\mathrm{FBI}}_{i,j}$ = $\frac{2\pi}{p} \cdot \text{sum}$
 	end
  end
  $\mathbf{Output}$: $f^{\mathrm{FBI}}$
\end{lstlisting}

\vspace*{2ex}

\section{Numerical Results}
\label{s:results}

In the following, we test our method first for two simulated phantoms, the first one performing a shift and the second one a stretch. In both cases, landmark detection is performed via deep learning with the network described in Section \ref{sec:cnn6}. Then we test our method on real data, where we perform landmark detection by hand.

\vspace*{2ex}

\subsection{Simulated Data}
\paragraph*{Synthetic Measurements}
Both simulated phantoms are of size $512 \times 512$, contain one rectangle with greyscale values in $[0,1]$, and perform an affine linear motion with constant speed as in \eqref{eq:affineMotion}. In the first case, the rectangle moves with a simple shift (rigid body motion) defined in the pixel domain by
\begin{align*}
C_1(t_\theta) = I, \quad b_1(t_\theta)_{pix} = \frac{t_\theta}{N-1} 
\begin{pmatrix}
51 \\ 
51
\end{pmatrix} 
\end{align*}
In the second case, the rectangle is stretched as described by the motion

\begin{align*}
C_{2,\theta} = C_2(t_\theta) = I + \frac{t_\theta}{N-1} (C - I), \quad b_2(t_\theta) = 0
\end{align*}
for $t_\theta \in \{0,\ldots,N-1\}$, $N=450$ and 
$C = \begin{pmatrix}
2 & 0 \\ 
0 & 1
\end{pmatrix}.$

Next, we compute the dynamic sinogram as well as the two static sinograms for both the initial and the final position to calculate the inexactness $\eta(s,\varphi)$ for 300 offset values and 450 angles, and add uniformly distributed noise in $[-0.02,0.02]$. These yield the input to our hybrid three-step reconstruction scheme.

\begin{figure}[t]
\centering
\caption{Landmark detection on reconstructions from RESESOP-Kaczmarz without noise}
\includegraphics[width=\textwidth]{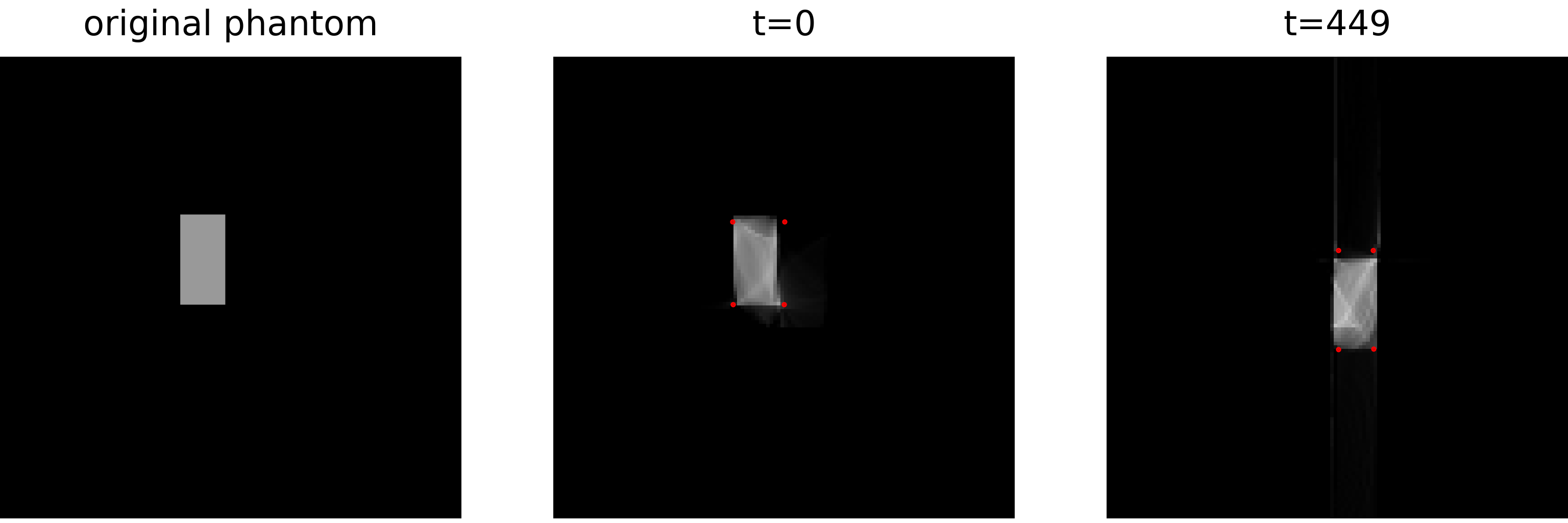}
\label{fig06:landmark}
\end{figure}

\vspace*{2ex}

\paragraph*{Reconstructions}

We reconstruct the object twice, once at the start time and again at the final time point, using three iterations of the RESESOP-Kaczmarz algorithm with the calculated inexactness. Note that we use an image size of only $128 \times 128$ pixels for these reconstructions to reduce the computation time. For all our reconstructions with RESESOP-Kaczmarz we choose $\tau = 1.00001$.
As can be seen in Figure~\ref{fig06:landmark}, the reconstructions are not satisfactory, but the quality is sufficient to see the shift of the object.
These reconstructions can now be passed on to the trained CNN6 network described in Section~\ref{sec:cnn6} to automatically detect the landmarks (here: corners), the results are shown in Figure~\ref{fig06:landmark}.

\begin{figure}[t]
\centering
\caption{Shifted rectangle without noise}
\includegraphics[width=\textwidth]{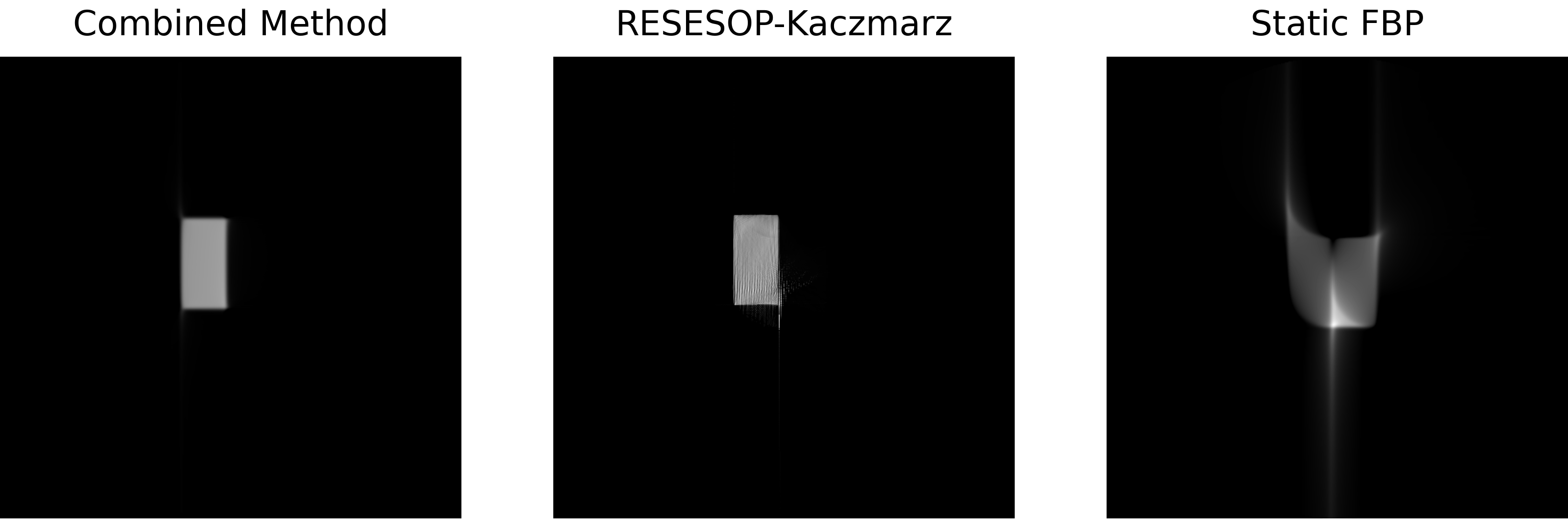}
\label{fig06:compare}
\end{figure}

\begin{figure}[h]
\centering
\caption{Shifted rectangle with noise}
\includegraphics[width=\textwidth]{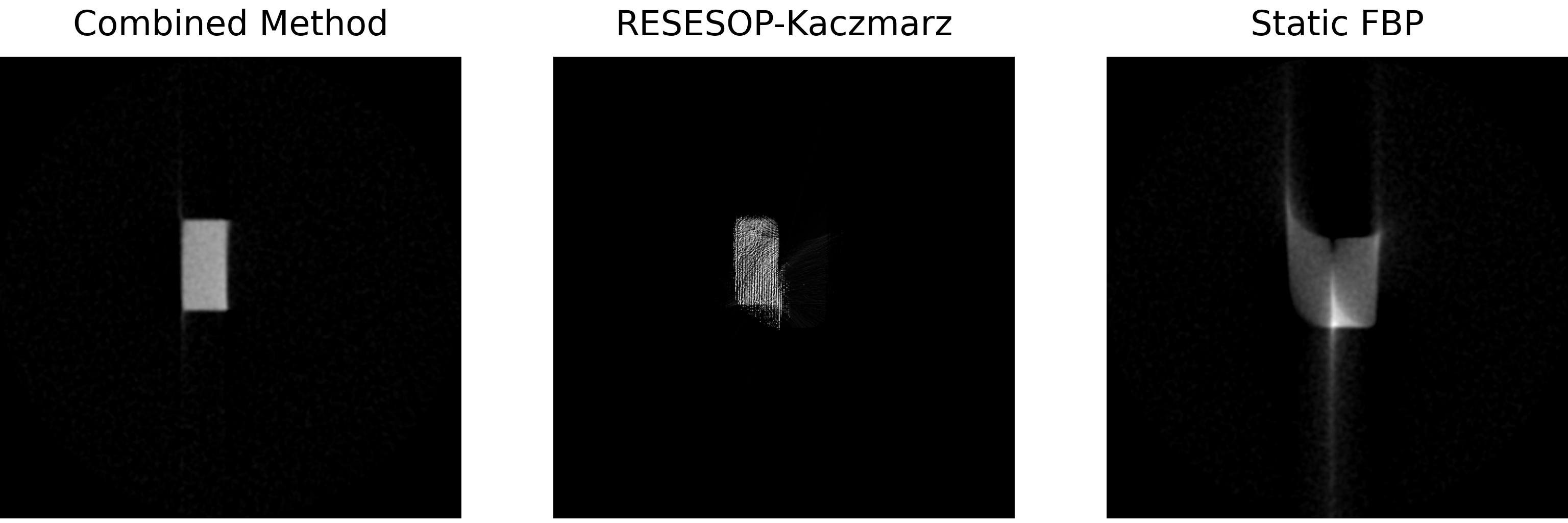}
\label{fig06:compareNoise}
\end{figure}

In the first case the shift is computed as the mean of the difference between the corner coordinates from the end point and the corner coordinates from the start point. In the second case, the entries of matrix $C$ are computed as a least-squares solution of the respective system of linear equations.

To avoid committing an inverse crime we choose the size of all reconstruction as $487 \times 487$, which differs from the original phantom size of $512 \times 512$. Some basic examples of inverse crimes and how to avoid them can be found in \cite[Chapter II]{inverseCrime}.

For comparison, we computed RESESOP-Kaczmarz reconstructions with $30$ iterations.
While the reconstruction of the RESESOP-Kaczmarz algorithm still has a small motion artifact under the lower right corner as well as inside the rectangle, the reconstruction of the combined method turns out to be a slightly blurred version of the ground truth 
(see Figures~\ref{fig06:compare} and~\ref{fig06:compareNoise}). Our hybrid method is thus able to recover the object without using the actual motion as an input.

\vspace{1ex}

In the case of the stretched rectangle (figure~\ref{fig06:compareNoiseMatrix}), the output of the combined method has stronger artifacts since the landmarks were not detected precisely, and therefore also the motion estimation was faulty.

\vspace{1ex}

Considering the run time, our combined method leads to a speed-up and only needs a little more than half of the computational time that is needed for 30 iterations of RESESOP-Kaczmarz.

\begin{figure}[t]
\centering
\caption{Stretched Rectangle with Noise}
\includegraphics[width=0.85\textwidth]{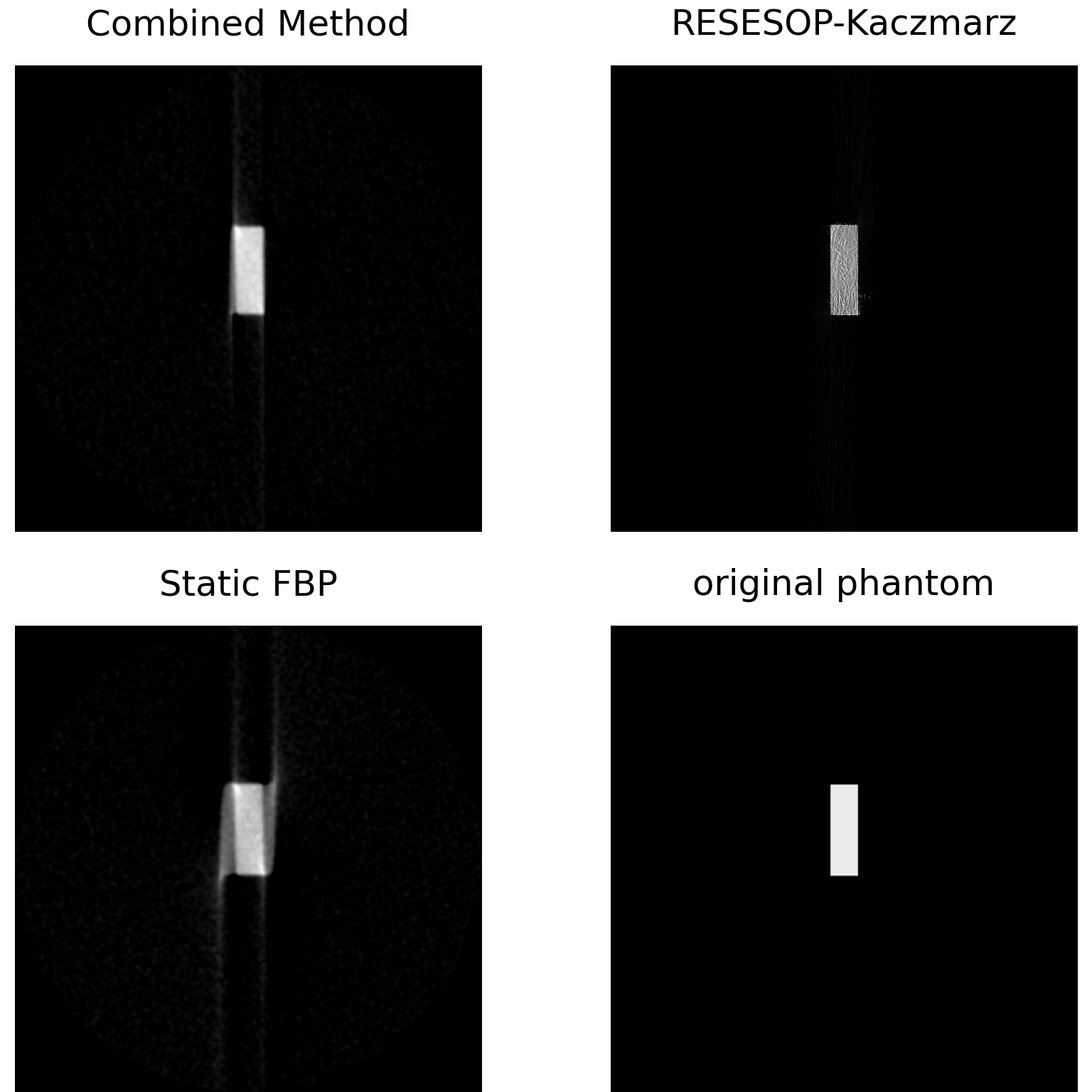}
\label{fig06:compareNoiseMatrix}
\end{figure}

\vspace*{2ex}

\subsection{Real Data}
\label{sec06:realData}

The data was measured by a cone-beam CT scanner designed and constructed in-house in the Industrial Mathematics Computed Tomography Laboratory at the University of Helsinki and was preprocessed using the Helsinki Tomography Toolbox for Matlab~\cite{heltomo}. The measurement settings were almost the same as in the open access CT dataset \cite{seashell} of a scanned seashell, where the following measurement description is based on.

The scanner consists of a motorized rotation stage (Thorlabs CR1-Z7), a molybdenum target X-ray tube (Oxford Instruments XTF5011), and a 12-bit, 2240x2368 pixel, energy-integrating flat panel detector (Hamatsu Photonics C7942CA-22).

360 X-ray projections were acquired using an angle increment of 1 degree. The X-ray source voltage and tube current were set at 45 kV and 1 mA, respectively. The exposure time of the flat panel detector was set to 1200 ms.

Two correction images were acquired before scanning the sample. A dark current image was created by averaging 255 images taken with the X-ray source off. A flat-field image was created by averaging 255 images taken with the X-ray source switched on with no sample placed in the scanner. After the scan, dark current and flat-field corrections \cite{seibert} were applied to each projection image using the Hamamatsu HiPic imaging software version 9.3.

Fan-beam sinogram data was created by using only the measured intensity values from the central plane of the X-ray cone beam.
Due to a slightly misaligned center of rotation in the scanner, the CT reconstructions can appear blurry. It was empirically observed that this problem can be compensated for quite well by shifting each projection to the left by five pixels, using circular boundary conditions, before performing any other operations on the projections.

\begin{figure}[t]
\centering
\caption{\label{fig06:LandmarkLego}RESESOP-Kaczmarz Reconstructions for the first and last time step after three iterations}
\includegraphics[width=\textwidth]{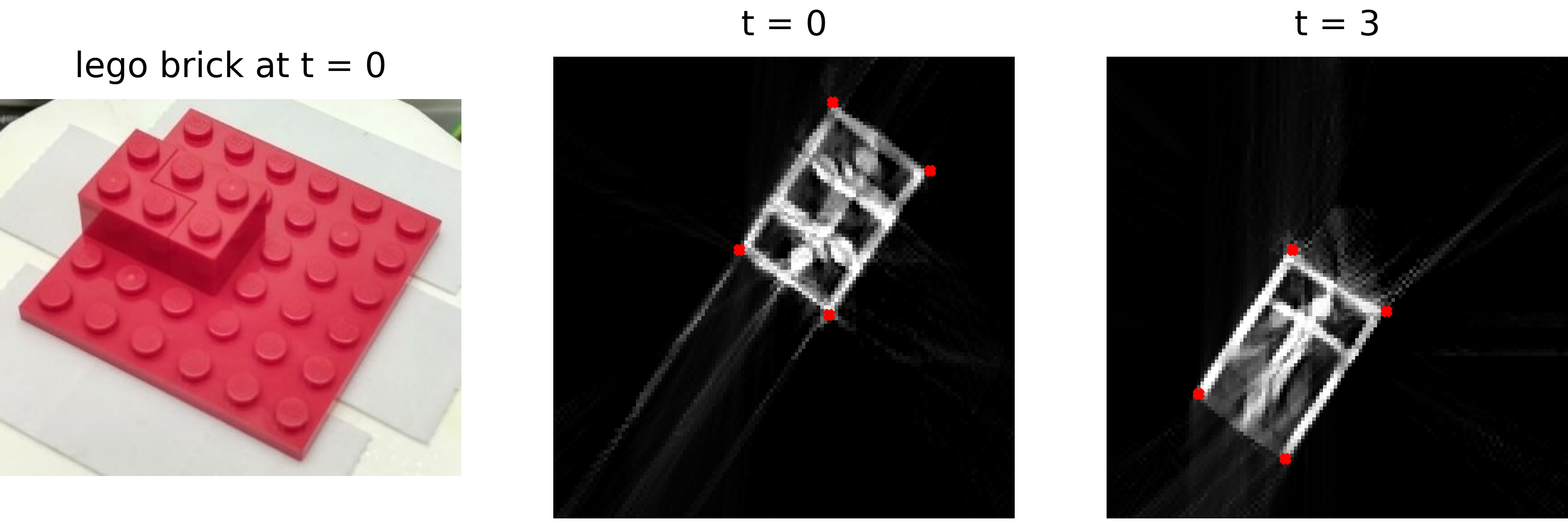}
\end{figure}
We scanned a composite Lego brick, shown in figure~\ref{fig06:LandmarkLego}, with these settings for four different time steps. For each measurement, we moved the brick one knob forward such that it reached the other end at the last time step. Then we computed the parallel-beam sinograms ranging from 0 to 180 degrees with the inbuild Matlab binning algorithm 'fan2para' and built the dynamic sinogram by taking the first 45 degrees from the first time step, 46 to 90 degrees from the second time step, and so on.

\begin{figure}[t]
\centering
\caption{\label{fig06:ComparisonLego}Comparison of the 3 Reconstruction Methods in relation to the FBP computed with the static sinogram}
\includegraphics[width=0.85\textwidth]{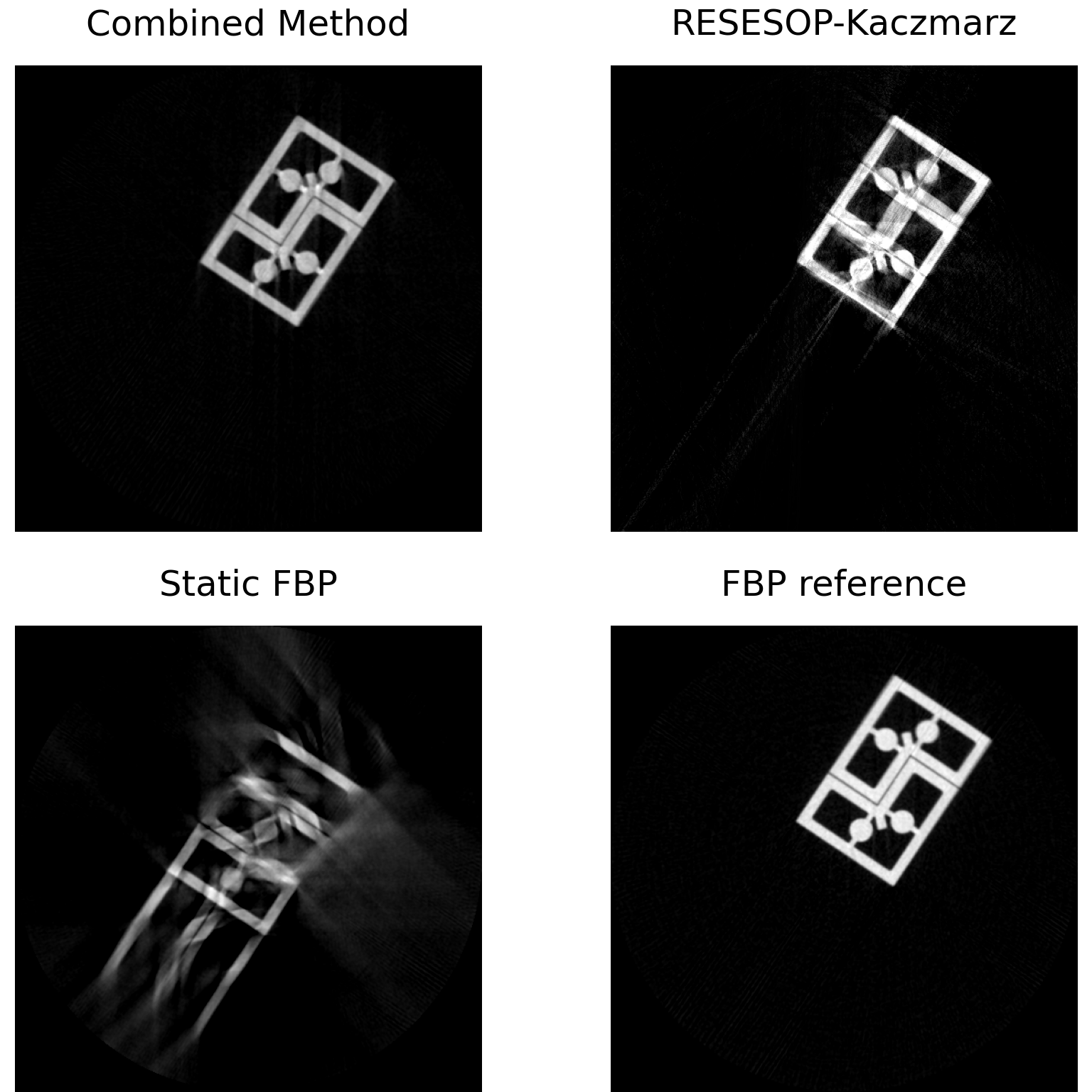}
\end{figure}

As inexactness for the RESESOP-Kaczmarz algorithm, we took the absolute differences between the dynamic sinogram and the sinogram of the first resp. last position of the lego brick.
We estimated the noise by taking the mean of the left blank part of the dynamic sinogram.

Three iterations of RESESOP-Kaczmarz to determine the initial and final state, and landmark detection by hand lead to the reconstructions of size $128 \times 128$, shown in figure~\ref{fig06:LandmarkLego}.

Afterwards, the mean of each coordinate shift yield the total movement of the Lego brick. To compute the reconstruction of the combined algorithm with dynamic filtered backprojection, we assumed that the movement took place in four steps with the same amount of projections per step. The results of size $512 \times 512$ are displayed in Figure~\ref{fig06:ComparisonLego}.
As in the experiments with the simulated data, the reconstruction by the combined method shows fewer artifacts and is about twice as fast as a pure RESESOP-Kaczmarz reconstruction.

\vspace*{2ex}

\section{Conclusion}

In this article, we proposed a proof of concept for a combination of RESESOP-Kaczmarz, landmark detection and filtered backprojection, i.e., a hybrid reconstruction technique that incorporates classical as well as data-driven methods for computerized tomography, to image an object undergoing an affine linear motion with constant speed during data acquisition. The motion is not known a priori, but estimated during the reconstruction. 
In addition, our combined method leads to a faster and more accurate solution of dynamic computerized tomography provided the respective model inexactness, stemming from working with the static model, is accessible and if the landmark detection works well.

By consequence, future research is dedicated to estimating the inexactness directly from the data to provide a realistic initial setting for our pipeline. Also, the landmark detection can be generalized towards more complex objects

In the future, it will be necessary to find ways to estimate the modeling inexactness given only the dynamic sinogram. Furthermore, it would be interesting to train neural networks performing landmark detection for a broader class of objects.

Our method could also be extended to more complex motions by interpolating the motion piecewise by affine linear motions with constant speed. To achieve this, it would be possible to estimate the inexactness at multiple different time points.

\vspace*{2ex}

\textbf{Acknowledgements:} 
G.S.~and A.W.~acknowledge support by the German
Research Foundation (DFG; Project-ID 432680300 ?
SFB 1456), and thank Bernadette Hahn-Rigaud and Alice Oberacker for fruitful discussions and help with the implementation.

\bibliographystyle{siam}
\bibliography{lit}

\end{document}